 \numberwithin{equation}{section}
\newtheorem{theorem}{Theorem}[section]
\newtheorem{corollary}[theorem]{Corollary}
\newtheorem{lemma}[theorem]{Lemma}
\newtheorem{proposition}[theorem]{Proposition}
\theoremstyle{definition}
\newtheorem{definition}[theorem]{Definition}
\newcommand{\RN}{\mathbb{R}^d}
\newcommand{\ot}{\Omega_T }
\newcommand{\po}{\partial\Omega}
\newcommand{\mdiv}{\textup{div}}
\newcommand{\mdet}{\textup{det}}
\newcommand{\io}{\int_{\Omega}}
\newcommand{\ioT}{\int_{\Omega_{T}}}
\newcommand{\wk}{w^{(k)}}
\newcommand{\wkg}{\left(w^{(k)}\right)}
\newcommand{\vok}{v_1^{(k)}}
\newcommand{\vtk}{v_2^{(k)}}
\newcommand{\ok}{\omega^{(k)}}
\newcommand{\no}{u_1}
\newcommand{\nt}{u_2}
\newcommand{\vo}{v_1}
\newcommand{\vt}{v_2}
\newcommand{\uo}{u_1}
\newcommand{\ut}{u_2}
\newcommand{\fo}{F_1}
\newcommand{\ft}{F_2}
\newcommand{\gt}{G_2}
\newcommand{\go}{G_1}
\newcommand{\we}{w^{(\varepsilon)}}
\newcommand{\weg}{\left(w^{(\varepsilon)}\right)}
\newcommand{\ome}{\omega^{(\varepsilon)}}
\newcommand{\re}{R^{(\varepsilon)}}
\newcommand{\roe}{R_1^{(\varepsilon)}}
\newcommand{\rte}{R_2^{(\varepsilon)}}
\newcommand{\rk}{R^{(k)}}
\newcommand{\rok}{R_1^{(k)}}
\newcommand{\rtk}{R_2^{(k)}}
\newcommand{\uoe}{u_1^{(\varepsilon)}}
\newcommand{\ute}{u_2^{(\varepsilon)}}
\newcommand{\uok}{u_1^{(k)}}
\newcommand{\utk}{u_2^{(k)}}
\newcommand{\bk}{B_k(0)}
\newcommand{\pt}{\partial_t}
\title[a  model governing the motion of two cell populations ] %Use the shortened version of the full title
{Global existence theorem for a  model governing the motion of two cell populations
%	an avascular tumor growth model with different mobilities    
}
\author[Brock C. Price and Xiangsheng Xu]{}
\subjclass{Primary: 35B45, 35K57, 35K55, 35K65, 35Q92, 76N10.}
\keywords{Cross-diffusion systems, reaction-diffusion, tissue growth models.}
\email{bcp193@msstate.edu}
\email{xxu@math.msstate.edu}
\begin{document}
	\maketitle
	
	% Enter the first author's name and address:
	\centerline{\scshape Brock C. Price and Xiangsheng Xu}
	\medskip
	{\footnotesize
		% please put the address of the first author
		\centerline{Department of Mathematics \& Statistics}
		\centerline{Mississippi State University}
		\centerline{ Mississippi State, MS 39762, USA}
	} % Do not forget to end the {\footnotesize by the sign }

	\bigskip

	%The abstract of your paper
	\begin{abstract}This article is concerned with the existence of a weak solution to the initial boundary problem for a cross-diffusion system which arises in the study of two cell population growth. The mathematical challenge is due to the fact that the coefficient matrix is non-symmetric and degenerate in the sense that its determinant is $0$. Existence assertion is established by exploring the fact that the total population density satisfies a porous media equation. 
	
	 	\end{abstract}
	%\mathbb{R}^2$
	%The title of your section 1
	\section{Introduction}
Let $\Omega$ be a bounded domain in $\mathbb{R}^d$ with Lipschitz boundary
$\po$ and $T$ any positive number. We consider the initial boundary value problem 
\begin{eqnarray}
\partial_t\no-\mu\mdiv\left(\no\nabla w^\gamma\right)&=&R_1\ \ \ \mbox{in $\ot\equiv\Omega\times(0,T)$,}\label{euo}\\
\partial_t\nt-\nu\mdiv\left(\nt\nabla w^\gamma\right)&=&R_2\ \ \ \mbox{in $\ot$,}\label{eut}\\
\no\nabla w^\gamma\cdot\mathbf{n}&=&0\ \ \ \mbox{on $\Sigma_T\equiv\po\times(0,T) $,}\label{uob}\\
\nt\nabla w^\gamma\cdot\mathbf{n}&=&0\ \ \ \mbox{on $\Sigma_T$,}\label{utb}\\
(\uo(x,0),\ut(x,0))&=&(u^{(0)}_1(x), u^{(0)}_2(x))\ \ \mbox{on $\Omega$, }\label{uicon}
\end{eqnarray}
where $\mathbf{n}$ is the unit outward normal to $\po$, 
%Let for some $w_p>0$
\begin{eqnarray}
w&=&\uo+\ut,\label{wdef}\\
R_1&=&\no\fo(w)+\nt\go(w),\ \ \mbox{and}\label{r1}\\
R_2&=&\no\ft(w)+\nt\gt(w).\label{r2}
\end{eqnarray}
Assume:
\begin{enumerate}
	\item[\textup{(H1)}] $F_i, G_i, i=1,2,$ are all continuous functions with the properties
	%	that there is a positive number $w_p$ such that
	\begin{eqnarray}
	F(w)&\equiv&\fo(w)+\ft(w)\leq 0\ \ \mbox{on $[w_p, \infty)$,}\label{fgc1}\\
	G(w)&\equiv&\go(w)+\gt(w)\leq 0 \ \ \mbox{on $[w_p, \infty)$,}\ \ \mbox{and}\ \ \label{fgc2}\\
	E(w)&\equiv&\min\{F_1(w),\ft(w),\go(w), G_2(w)\}\geq 0\ \ \mbox{on $ [0, w_p)$ for some $w_p>0$;}\nonumber
	\end{eqnarray}
	\item[\textup{(H2)}] $\mu,\nu\in (0,\infty),\ \gamma> 1$;
	%Moreover,
	\item[\textup{(H3)}] $\uo^{(0)}(x)\geq 0,\ \ \ut^{(0)}(x)\geq 0$, and
	\begin{equation} 
	% c_0\leq
	w(x,0)\leq w_p\ \ \mbox{on $\Omega$}.\label{winc}
	\end{equation}
\end{enumerate}
This problem can be used to describe the interaction between a population of dividing cells and a population of non-dividing cells (see \cite{GPS} and the references therein). In this case the function $w^\gamma$ represents the pressure.
%Partial differential equations (PDE) describing the dynamics of
%cell population densities from a 
%fluid mechanical perspective can model the growth
%of avascular tumors [2, 20, 23]. These models rely on the observation that, when
%in mechanical contact with other cells, proliferating cells exert a pressure on their
% the dynamics of two different cell populations subject to 
The second terms on the left hand sides of the two equations \eqref{euo} and \eqref{eut} model the tendency of cells to move down
pressure gradients and rely on the definition of the cell velocity fields through Darcy's law \cite{BC}. The parameters $\mu, \nu$  stand for the mobility
(i.e. the quotient of permeability and viscosity) of dividing cells and non-dividing
cells, respectively. If $\mu\ne\nu$, then the two cell populations are characterized by
different mobilities.
%In the compressible description, the pressure p is given by a
%constitutive relation whose desirable properties are discussed, for instance, in [10].
%For simplicity, we follow the lines of [8] and [19], and we use the constitutive relation
Assumptions \eqref{fgc1} and \eqref{fgc2} mean that competition for space decreases the cell division rate
according to the local pressure. The parameter $w_p$ models the threshold
pressure above which dividing cells are entering a quiescent state (i.e. the so-called
homeostatic pressure) \cite{BD}.

The objective of this paper is to investigate approximation to the initial boundary value problem when $\mu$ and $\nu$ may be of different values and existence when $\mu=\nu$.

\begin{definition}We say that $(\uo, \ut)$ is a weak solution to \eqref{euo}-\eqref{uicon} if:
	\begin{enumerate}
		\item[\textup{(D1)}]$\uo,\ut$ are non-negative and bounded with
		\begin{equation*}
		\partial_t\uo,\ \ \partial_t\ut\in L^2(0,T; \left(W^{1,2}(\Omega)\right)^*),\ \ w^{\frac{\gamma+1}{2}}\in L^2(0,T; W^{1,2}(\Omega)),
		\end{equation*}
		where $w$ is given as in \eqref{wdef} and $\left(W^{1,2}(\Omega)\right)^*$ denotes the dual space of $W^{1,2}(\Omega)$;
		\item[\textup{(D2)}]there hold
		\begin{eqnarray*}
			-\ioT\uo\partial_t\varphi dxdt+\ioT\uo\nabla w^\gamma\cdot\nabla\varphi dxdt&=&\ioT R_1\varphi dxdt-\langle \uo(\cdot, T), \varphi(\cdot, T)\rangle\nonumber\\
			&&+\io \uo^{(0)}(x)\varphi(x, 0)dx,\\
			-\ioT\ut\partial_t\varphi dxdt+\ioT\ut\nabla w^\gamma\cdot\nabla\varphi dxdt&=&\ioT R_2\varphi dxdt-\langle \ut(\cdot, T), \varphi(\cdot, T)\rangle\nonumber\\
			&&+\io \ut^{(0)}(x)\varphi(x, 0)dx
		\end{eqnarray*}
		for each smooth function $\varphi$, 
		where $\langle \cdot,\cdot\rangle$ denotes the duality pairing between $W^{1,2}(\Omega)$ and $\left(W^{1,2}(\Omega)\right)^*$.
	\end{enumerate}
\end{definition}
To see that the two equations in (D2) make sense, we can conclude from (D1) that
$\uo,\ut\in C([0, T]; \left(W^{1,2}(\Omega)\right)^*)$. Since $ w$ is bounded and
$\gamma\geq \frac{\gamma+1}{2}$, we also have $w^\gamma\in  L^2(0,T; W^{1,2}(\Omega))$.
\begin{theorem}\label{mth} Let (H1)-(H3) be satisfied. Assume:
	\begin{enumerate}
		\item[\textup{(H4)}] $\uo^{(0)}, \ut^{(0)}\in W^{1,2}(\Omega)$; %\left(\uo^{(0)}+\ut^{(0)}\in\right)^{\gamma+1}\in W^{1,2}(\Omega)$;
		\item[\textup{(H5)}] $\mu=\nu$.
	\end{enumerate}
	Then there is a weak solution to \eqref{euo}-\eqref{uicon}.	
\end{theorem}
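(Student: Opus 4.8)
The plan is to build the solution around assumption (H5). When $\mu=\nu$, adding \eqref{euo} and \eqref{eut} and using the identity $w\nabla w^\gamma=\tfrac{\gamma}{\gamma+1}\nabla w^{\gamma+1}$ shows that the total density $w=\no+\nt$ solves the porous--medium equation
\begin{equation*}
\pt w-\frac{\mu\gamma}{\gamma+1}\,\Delta w^{\gamma+1}=\no F(w)+\nt G(w)\qquad\text{in }\ot,
\end{equation*}
together with the no--flux condition inherited from \eqref{uob}--\eqref{utb}, where $F,G$ are as in \eqref{fgc1}--\eqref{fgc2}. All of the diffusion in the system is concentrated in this one scalar equation, and the idea is to use its regularizing effect to compensate for the vanishing determinant of the coefficient matrix. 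The first step is to record the a priori bounds that structure everything else: the triangle $\{\no\ge0,\ \nt\ge0,\ \no+\nt\le w_p\}$ is an invariant region. Indeed, on $\{\no=0\}$ one has $R_1=\nt\go(w)\ge0$ and on $\{\nt=0\}$ one has $R_2=\no\ft(w)\ge0$, because $E(w)\ge0$ forces all four reaction coefficients to be nonnegative on $[0,w_p)$, while on $\{w=w_p\}$ the sum $R_1+R_2=\no F(w)+\nt G(w)\le0$ by \eqref{fgc1}--\eqref{fgc2}; combined with \eqref{winc} this gives $0\le\no,\nt\le w_p$ throughout $\ot$.

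Next I would produce approximate solutions by removing the degeneracy. Adding a vanishing viscosity to each equation, so that \eqref{euo}--\eqref{eut} become
\begin{equation*}
\pt\uoe-\mu\,\mdiv\!\big(\uoe\nabla(\we)^\gamma\big)-\varepsilon\Delta\uoe=R_1^{(\varepsilon)},\qquad
\pt\ute-\mu\,\mdiv\!\big(\ute\nabla(\we)^\gamma\big)-\varepsilon\Delta\ute=R_2^{(\varepsilon)},
\end{equation*}
yields a uniformly parabolic system that I would solve for each fixed $\varepsilon>0$ by a Schauder fixed--point argument: freezing the drift $\nabla(\we)^\gamma$ and the reaction coefficients $\fo(\we),\ft(\we),\go(\we),\gt(\we)$ from a given iterate turns the problem into a linear parabolic system, and the invariant--region bounds confine the iteration to a compact, convex set on which parabolic regularity supplies continuity of the solution map. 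The maximum--principle argument above is stable under this regularization, so $0\le\uoe,\ute\le w_p$ uniformly in $\varepsilon$.

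The uniform estimates come entirely from the porous--medium structure. Testing the regularized $w$--equation with a suitable power of $w$ and using $0\le\we\le w_p$ gives the porous--medium energy estimate (its initial value being finite by (H4)), hence a uniform bound on $(\we)^{\frac{\gamma+1}{2}}$ in $L^2(0,T;W^{1,2}(\Omega))$, which by boundedness controls $(\we)^\gamma$ in the same space, together with a bound on $\sqrt{\varepsilon}\,\nabla\uoe$ and $\sqrt{\varepsilon}\,\nabla\ute$ in $L^2(\ot)$; reading $\pt\uoe,\pt\ute$ off the equations bounds them, and hence $\pt\we$, in $L^2(0,T;(W^{1,2}(\Omega))^*)$. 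A compactness lemma of Aubin--Lions--Alt--Luckhaus type, adapted to the porous--medium structure, then yields $\we\to w$ strongly in $L^2(\ot)$ and a.e., so that $(\we)^\gamma\to w^\gamma$ strongly and $\nabla(\we)^\gamma\rightharpoonup\nabla w^\gamma$ weakly in $L^2(\ot)$. The individual densities inherit no spatial compactness uniform in $\varepsilon$, however, so a priori only $\uoe\rightharpoonup\no$ and $\ute\rightharpoonup\nt$ weakly-$*$ in $L^\infty(\ot)$.

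The main obstacle is exactly this last point: passing to the limit in the degenerate fluxes $\uoe\nabla(\we)^\gamma$, which are products of two merely weakly convergent sequences. The structural fact that saves the argument is that, because $\mu=\nu$, both populations are advected by the \emph{same} velocity field $-\mu\nabla w^\gamma$, so the fractions $\voe=\uoe/\we$ and $\vte=\ute/\we$ formally satisfy the transport equations $\pt v_i-\mu\nabla w^\gamma\cdot\nabla v_i=\tfrac1w\big(R_i-v_i(R_1+R_2)\big)$, in which no transverse diffusion is lost in the limit and the split of $w$ into $\no,\nt$ stays under control. Concretely, I would upgrade the weak convergence of the pressure gradient to strong convergence of $\nabla(\we)^\gamma$ in $L^2(\ot)$ by proving that the porous--medium energies converge, comparing the energy identity for the limit equation with those of the approximations and using the a.e. convergence together with the uniform $L^\infty$ bound (Vitali). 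Once $\nabla(\we)^\gamma\to\nabla w^\gamma$ strongly, the weak--times--strong principle gives $\uoe\nabla(\we)^\gamma\rightharpoonup\no\nabla w^\gamma$ and likewise for $\nt$, while the reaction terms pass to the limit from the a.e. convergence of $\we$ and the weak convergence of $\uoe,\ute$; the regularizing terms vanish since $\sqrt{\varepsilon}\,\nabla\uoe$ is bounded. Letting $\varepsilon\to0$ in the weak formulation then produces a pair satisfying (D1)--(D2), which is the asserted weak solution.
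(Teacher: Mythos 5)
Your overall strategy coincides with the paper's: an $\varepsilon$-viscosity regularization solved by a fixed-point argument, the invariant region $0\le\uoe,\ute$ and $\we\le w_p$, the entropy/energy estimate giving $\nabla(\we)^{(\gamma+1)/2}$ in $L^2(\ot)$ together with $\sqrt{\varepsilon}\,\nabla\sqrt{\uoe}$, Aubin--Lions compactness for $\we$, an energy-comparison argument for the pressure gradient, and weak-times-strong convergence for the degenerate fluxes. The one step where you diverge is also the one with a genuine gap: you claim that comparing energy identities yields $\nabla(\we)^{\gamma}\to\nabla w^{\gamma}$ \emph{strongly} in $L^2(\ot)$, and your limit passage $\uoe\nabla(\we)^\gamma\rightharpoonup\uo\nabla w^\gamma$ rests entirely on this. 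But the porous-medium energy identities do not control $\nabla(\we)^\gamma$: testing the $w$-equation with $(\we)^{\gamma+1}$ produces the dissipation $\ioT|\nabla(\we)^{\gamma+1}|^2\,dxdt$, testing with $(\we)^\gamma$ produces the weighted quantity $\ioT\we|\nabla(\we)^\gamma|^2\,dxdt$, and the test function $(\we)^{\gamma-1}$ that would produce $\ioT|\nabla(\we)^\gamma|^2\,dxdt$ is not admissible for the limit equation unless $\gamma\ge 3$ (one needs $\nabla w^{\gamma-1}\in L^2$, whereas the available estimate only gives $\nabla w^{p}\in L^2$ for $p\ge\frac{\gamma+1}{2}$). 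Converting strong convergence of $\nabla(\we)^{\gamma+1}$ into strong convergence of $\nabla(\we)^{\gamma}$ requires dividing by a power of $\we$, which degenerates exactly on $\{w=0\}$ --- a set that may well have positive measure here, since segregated or compactly supported initial data are allowed. So the assertion your limit passage needs is not delivered by the argument you propose.

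The paper closes this differently. Lemma \ref{l33} establishes strong convergence only of $\nabla(\we)^{\gamma+1}$ (by first testing with $\partial_t\ome$, which is where (H4) enters, and then with $(\we)^{\gamma+1}-w^{\gamma+1}$). The flux is then written as $\uoe\nabla(\we)^\gamma=\frac{\gamma}{\gamma+1}\eta_1^{(\varepsilon)}\nabla(\we)^{\gamma+1}$ with $\eta_1^{(\varepsilon)}=\uoe/\we\le 1$, so weak-$*$ times strong gives the limit $\frac{\gamma}{\gamma+1}\eta_1\nabla w^{\gamma+1}=\eta_1 w\nabla w^\gamma$, and a separate truncation argument with $(\we-\delta)^+$ identifies $\eta_1w=\uo$. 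You clearly sense this structure --- your remark that the fractions $\uoe/\we$ are transported by a common velocity field is exactly the right intuition --- but you do not carry the ratio into the actual limit passage, and without either the identification $\eta_1w=\uo$ or a genuine proof of unweighted strong convergence of $\nabla(\we)^\gamma$, the passage to the limit in the fluxes is not justified. A smaller omission of the same kind: your energy comparison needs the convergence of $\io(\we(x,T))^{\gamma+2}dx$ at the endpoint $t=T$, which the paper obtains from the bound on $\partial_t(\we)^{\frac{\gamma+2}{2}}$ in $L^2(\ot)$ and part (ii) of Lemma \ref{la}; this is a second place where (H4) is used and which your sketch passes over.
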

In general, the rigorous mathematical analysis of nonlinear differential equations depends primarily upon deriving estimates, but typically also upon using these estimates to justify limiting procedures of various sorts. The two issues are closely related.  Our system here is a cross-diffusion one, and  mathematical analysis of systems of this type
% of systems of equations 
has attracted a lot of attention recently. One approach (see, e.g., \cite{CDJ, CJ}) is to seek a possibly convex function $\psi$ on $\mathbb{R}^2$ so that $t \rightarrow
\io \psi(\uo(x,t), \ut(x,t))dx$ is a Lyapunov functional along the solutions to \eqref{euo}-\eqref{utb}. Unfortunately, this so-call entropy method cannot lead to
%does not seem to generate enough a priori estimates to achieve
an existence assertion here. To see this,
%Let $\psi$ be a solution of \eqref{efps}. It is easy to see that
%\begin{equation}
%\nabla\psi_{\uo}=\psi_{\uo\uo}\nabla\uo+\psi_{\uo\ut}\nabla\ut,\ \ %\nabla\psi_{\ut}=\psi_{\ut\uo}\nabla\uo+\psi_{\ut\ut}\nabla\ut.
%\end{equation}
we calculate
\begin{eqnarray}
\frac{d}{dt}\io\psi(\uo,\ut)dx&=&\io\left(\psi_{\uo}\partial_t\uo+\psi_{\ut}\partial_t\ut\right)dx\nonumber\\
&=&-\mu\gamma\io\uo w^{\gamma-1}\nabla w\cdot\left(\psi_{\uo\uo}\nabla\uo+\psi_{\uo\ut}\nabla\ut\right)dx\nonumber\\
&&-\nu\gamma\io\ut w^{\gamma-1}\nabla w\cdot\left(\psi_{\ut\uo}\nabla\uo+\psi_{\ut\ut}\nabla\ut\right)dx\nonumber\\
&&+\io\left(R_1\psi_{\uo}+R_2\psi_{\ut}\right)dx\nonumber\\
&=&-\gamma\io w^{\gamma-1}\left(a|\nabla\uo|^2+(a+b)\nabla\uo\cdot\nabla\ut+b|\nabla\ut|^2\right)dx\nonumber\\
&&+\io\left(R_1\psi_{\uo}+R_2\psi_{\ut}\right)dx,\label{ipsi}
\end{eqnarray}
where
\begin{eqnarray*}
	a&=&\mu\uo\psi_{\uo\uo}+\nu\ut\psi_{\ut\uo},\\
	b&=&\mu\uo\psi_{\ut\uo}+\nu\ut\psi_{\ut\ut}.
\end{eqnarray*}
\begin{proposition}\label{ABC}The quadratic term $A|\xi|^2+B\xi\cdot\eta+C|\eta|^2\geq 0$ for all $\xi,\eta\in \mathbb{R}^d$ if and only if 
	\begin{equation}\label{abc}
	A\geq 0, \ \ C\geq 0,\ \ \mbox{and}\ \  B^2\leq 4AC.
	\end{equation}
\end{proposition}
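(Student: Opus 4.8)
The plan is to prove the two implications separately, in each case reducing the vector-valued form to a scalar quadratic in which the vectors $\xi,\eta$ enter only through $|\xi|^2$, $\xi\cdot\eta$, and $|\eta|^2$.

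For sufficiency, I would assume $A\ge 0$, $C\ge 0$, and $B^2\le 4AC$ and complete the square. When $A>0$ this reads
\[
A|\xi|^2+B\xi\cdot\eta+C|\eta|^2
=A\Bigl|\xi+\tfrac{B}{2A}\eta\Bigr|^2+\Bigl(C-\tfrac{B^2}{4A}\Bigr)|\eta|^2 ,
\]
where the first term is nonnegative because $A>0$ and the second because $C-\frac{B^2}{4A}\ge 0$ by the discriminant hypothesis; hence the whole expression is $\ge 0$ for every $\xi,\eta$. The degenerate case $A=0$ has to be handled on its own: then $B^2\le 4AC=0$ forces $B=0$, and the form collapses to $C|\eta|^2\ge 0$.

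For necessity, I would suppose the form is nonnegative for all $\xi,\eta\in\mathbb{R}^d$ and recover the three conditions by testing on parallel vectors. Taking $\xi=s\mathbf{e}$ and $\eta=t\mathbf{e}$, with $\mathbf{e}$ a fixed unit vector and $s,t\in\mathbb{R}$, reduces the hypothesis to $As^2+Bst+Ct^2\ge 0$ for all real $s,t$. Setting $t=0$ gives $A\ge 0$ and setting $s=0$ gives $C\ge 0$; with $A\ge 0$ already known, the classical condition for the one-variable quadratic $As^2+Bs+C$ (the case $t=1$) to be nonnegative for all $s$ is $B^2\le 4AC$, which also covers the subcase $A=0$ since then it forces $B=0$.

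This is an elementary lemma, so there is no serious obstacle; the only point requiring genuine care is the degenerate case $A=0$ in the sufficiency direction, where dividing by $A$ to complete the square is illegitimate. I expect to dispatch it by the short separate argument above, and the remainder is routine.
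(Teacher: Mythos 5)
Your proof is correct and follows essentially the same route as the paper: sufficiency by completing the square (with the degenerate case $A=0$ treated separately), and necessity by testing on collinear vectors, which is the same maneuver as the paper's choice $\xi=-\frac{B}{2A}\eta$. Your reduction to the scalar quadratic $As^2+Bst+Ct^2$ packages the necessity argument (including the $A=0$, $B\neq 0$ subcase) a bit more cleanly than the paper's ad hoc remark, but the substance is identical.
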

\begin{proof} If $A|\xi|^2+B\xi\cdot\eta+C|\eta|^2\geq 0$ for all $\xi,\eta\in \mathbb{R}^d$ then we must have 
	\begin{equation*}
	A\geq 0\ \ \mbox{and}\ \  C\geq 0.
	\end{equation*}
	If $A=0$, then we must have $B=0$. Otherwise, we could always choose $\xi, \eta$ so that
	\begin{equation*}
	(B\xi+C\eta)\cdot\eta<0.
	\end{equation*}  
	%because we can always choose $\xi,\eta$ such that $|\eta|=0$ and $\xi\cdot\eta<0$. 
	If $A>0$, then 	
	%It is well known that This can easily be seen from the calculation
	\begin{eqnarray}
	A|\xi|^2+B\xi\cdot\eta+C|\eta|^2&=&\left(\sqrt{A}\xi+\frac{B}{2\sqrt{A}}\eta\right)^2+\left(C-\frac{B^2}{4A}\right)|\eta|^2\geq 0.\label{abc1}
	%\nonumber\\
	%&=&\left(\sqrt{a}\xi+\frac{a+b}{2\sqrt{a}}\eta\right)^2-\frac{(a-b)^2}{4a}%|\eta|^2.
	\end{eqnarray}
	Taking any $\eta$ with $|\eta|=1$ and $\xi=-\frac{B}{2A}\eta$, we obtain \eqref{abc}.
	The converse is an easy consequence of \eqref{abc1}.
\end{proof}
Thus to ensure
\begin{equation*}
a|\nabla\uo|^2+(a+b)\nabla\uo\cdot\nabla\ut+b|\nabla\ut|^2\geq 0,
\end{equation*}
we must choose $\psi $ so that
\begin{equation*}
a\geq 0\ \ \mbox{and} \ \ a=b.
\end{equation*}
Then \eqref{ipsi} reduces to
\begin{eqnarray}
\lefteqn{\frac{d}{dt}\io\psi(\uo,\ut)dx+\gamma\io w^{\gamma-1}a|\nabla w|^2dx}\nonumber\\
&=&\io\left[(\uo\fo(w)+\ut\go(w))\psi_{\uo}+(\uo\ft(w)+\ut\gt(w))\psi_{\ut}\right]dx,\label{west}
\end{eqnarray}
which can only give us an estimate on the gradient of the sum of $\uo$ and $\ut$. This is not very surprising because our system is degenerate in the sense that the coefficient matrix 
\begin{equation*}
A=\gamma w^{\gamma-1}\left(\begin{array}{cc}
\mu\uo &\mu\uo\\
\nu\ut &\nu\ut
\end{array}\right)
\end{equation*}
has determinant $0$. As we shall see,  \eqref{west} is an important equation to us. But this alone is not enough for an existence assertion. To gain more information, we are forced to assume $\mu=\nu$. Under this assumption, the total density $w$ satisfies a porous media equation, and we wish to take advantage of this fact. To be more specific,
% In fact, this is what one would have expected becauseHowever, if , we can show that
%$\{\weg^{\gamma+1}\}$ is precompact in $L^2(0,T; W^{1,2}(\Omega))$. 
we employ a so-called weak convergence method\cite{E}. That is, construct an approximation and then pass to the limit.
%For the latter, so-called weak convergence methods can be extremely valuable, as illustrated by many examples in the booklet \cite{E}. 
The central issue is how to take the limit in the product
\begin{equation*}
u_i^{(\varepsilon)}\nabla \weg^{\gamma}, \ \ i=1,2,
\end{equation*}
in our approximate problems. As we indicated earlier, the hyperbolic nature of our system and the fact that $\mdet(A)=0$ prevent us from obtaining any meaningful estimates for the sequence $(\nabla\uoe, \nabla\ute)$. The idea in \cite{GPS} in the case $\Omega=\mathbb{R}^d$ was to 
%the authors obtained an existence assertion by 
prove the precompactness
of $\{\weg^\gamma\}$ in $L^2(0,T; W^{1,2}(\mathbb{R}^d))$. This required a rather sophisticated analysis. 
%The initial value problem for system \eqref{euo}-\eqref{eut} was considered in \cite{GPS}. The method there relies on 
Indeed, the authors of \cite{GPS} achieved their goal by developing an extension of the Aronson-Benilan regularizing effect for porous media equations which provided estimates for the Laplacian of the pressure term $w^\gamma$.
%We would like to remark that
In our case
% it seems to be easier to 
we obtain the precompactness of $\{\weg^{\gamma+1}\}$ in $L^2(0,T; W^{1,2}(\Omega))$ and show that this is enough to justify passing to the limit. Our proof seems to be more direct and also simpler and requires
% elementary proof. 
weaker assumptions. For example, we do not impose any assumptions on the second order partial derives of the initial data as did in \cite{GPS}. Moreover,  condition (7) in \cite{GPS}, which imposes restrictions on space dimensions and the growth of $|F(w) -G(w)|$ near $0$,  has also been removed.
%make sure that how 
%some simple “nonlinear resonance” effects (occurring when the weak limit of the product of two sequences of functions is not the product of the individual weak limits).
% do not appear.

If the initial data $u^{(1)}_0(x), u^{(2)}_0(x)$ have disjoint supports, a result in \cite{CFSS} indicates that they can remain disjoint for all $t>0$ at least in the case $d=1$. That is to say, the two cell populations are segregated. Our assumption \eqref{winc} does not exclude this possibility here.  We also refer the reader to \cite{LLP} and the references therein for numerical results that deal with how the mobilities change the morphology of the interfaces between the two cell populations and analytical study of traveling wave solutions with composite shapes and discontinuities for cell-density models
%ON INTERFACES BETWEEN CELL POPULATIONS 3are devoted to 
of avascular tumor growth.

This paper is organized as follows. In Section 2 we fabricate an approximation scheme for \eqref{euo}-\eqref{uicon} and prove an existence assertion for the approximate problems. Here we allow the possibility that $\mu\ne\nu$. In Section 3 we prove Theorem \ref{mth}. Then we give a brief indication on how to extend Theorem \ref{mth} to the case $\Omega=\RN$. Since all these are done under the assumption $\mu=\nu$, the case where  $\mu\ne\nu$ remains open.

\section{ the approximate problems }
In this section we design an approximation scheme for \eqref{euo}-\eqref{uicon} from a totally different perspective than
%, which is also  from 
the one in \cite{GPS},
%rule out 
and then prove the existence of a solution to the approximate problems. Before we begin, we will need the following two classical lemmas.  
\begin{lemma}\label{comt}Let $f$ be a  non-negative function on $\ot$ and $\alpha> 0$. Assume that
	\begin{enumerate}
		\item[\textup{(C1)}] 
		%$\{f_n^\alpha\}$ is bounded in 
		$f^\alpha\in L^2(0,T;W^{1,2}(\Omega))$;
		\item[\textup{(C2)}]
		%$\{f_n\}$ and $\{\partial_tf_n\}$ are bounded in
		$\partial_tf\in L^2(0,T;\left(W^{1,2}(\Omega)\right)^*)$.
	\end{enumerate}
	Then 
	%$\{f_n\}$ is precompact in $C([0,T]; L^{\alpha+1}(\Omega))$. Furthermore, 
	the function $t\rightarrow \io f^{\alpha+1}(x,t)dx$ is absolutely continuous on $[0,T]$ and
	\begin{equation}\label{lm}
	\frac{d}{dt}\io f^{\alpha+1}dx=(\alpha+1)\left\langle\partial_tf,f^\alpha\right\rangle.
	\end{equation}
	
\end{lemma}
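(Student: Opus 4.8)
The plan is to establish the chain‑rule identity \eqref{lm} by a standard regularization‑and‑approximation argument, since the exponent $\alpha+1$ in general exceeds any power for which we could differentiate naively, and $f$ itself need not be differentiable in time in the classical sense. The conclusion is really a statement that the nonlinear function $s\mapsto s^{\alpha+1}$ composed with $f$ inherits enough regularity from hypotheses (C1)–(C2) to permit the Lebesgue‑point formula for the time derivative. I would first reduce to a cleaner setting by introducing $g = f^{\alpha}$, so that (C1) reads $g \in L^2(0,T;W^{1,2}(\Omega))$, and note that $f^{\alpha+1} = f\cdot g = f^{1/\alpha}\cdot g^{(\alpha+1)/\alpha}$; however the more robust route is to work directly with $f$ and exploit the duality pairing $\langle \partial_t f, f^\alpha\rangle$, which is well defined for a.e.\ $t$ precisely because $\partial_t f(\cdot,t)\in (W^{1,2}(\Omega))^*$ by (C2) and $f^\alpha(\cdot,t)\in W^{1,2}(\Omega)$ for a.e.\ $t$ by (C1).

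\textbf{Steps in order.} First I would mollify in time: set $f_h = \rho_h * f$ (a Steklov average or convolution with a smooth kernel $\rho_h$, extended suitably near the endpoints), so that $f_h$ is smooth in $t$ with values in $W^{1,2}(\Omega)$ and $\partial_t f_h = \rho_h * \partial_t f$. For the regularized function the classical chain rule gives
\begin{equation*}
\frac{d}{dt}\io f_h^{\alpha+1}\,dx = (\alpha+1)\io f_h^{\alpha}\,\partial_t f_h\,dx = (\alpha+1)\langle \partial_t f_h, f_h^\alpha\rangle,
\end{equation*}
which holds pointwise in $t$ and is integrable. Second, I would integrate this identity over an arbitrary subinterval $[t_1,t_2]\subset[0,T]$ to obtain an integrated (absolutely continuous) version free of pointwise derivatives. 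Third, I would pass to the limit $h\to 0$: the left‑hand side converges because $f_h\to f$ in $L^{2(\alpha+1)}$ or boundedly a.e., and the right‑hand side converges because $\partial_t f_h\to \partial_t f$ in $L^2(0,T;(W^{1,2})^*)$ while $f_h^\alpha\to f^\alpha$ strongly in $L^2(0,T;W^{1,2}(\Omega))$. This strong convergence of the nonlinear composition is the crux and must be justified carefully (see below). Finally, having established
\begin{equation*}
\io f^{\alpha+1}(x,t_2)\,dx - \io f^{\alpha+1}(x,t_1)\,dx = (\alpha+1)\int_{t_1}^{t_2}\langle \partial_t f, f^\alpha\rangle\,dt
\end{equation*}
for a.e.\ $t_1<t_2$, I would conclude that $t\mapsto \io f^{\alpha+1}\,dx$ is absolutely continuous (being the indefinite integral of an $L^1(0,T)$ function, since $\langle\partial_t f,f^\alpha\rangle\in L^1(0,T)$ by Cauchy–Schwarz applied to the dual pairing) and that \eqref{lm} holds for a.e.\ $t$.

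\textbf{The main obstacle} is the strong convergence $f_h^\alpha\to f^\alpha$ in $L^2(0,T;W^{1,2}(\Omega))$, i.e.\ controlling the spatial gradient of the nonlinear composition under mollification. One has $\nabla(f_h^\alpha)=\alpha f_h^{\alpha-1}\nabla f_h$, and relating this to $\rho_h*(f^{\alpha-1}\nabla f)=\rho_h*\tfrac{1}{\alpha}\nabla(f^\alpha)$ is not immediate because mollification does not commute with the nonlinearity. The clean way around this is to avoid differentiating $f_h^\alpha$ directly and instead establish \eqref{lm} through an abstract result on the pairing $\langle\partial_t f, f^\alpha\rangle$ for functions satisfying (C1)–(C2); since this lemma is classical, I would either cite the standard parabolic reference (e.g.\ an Alt–Luckhaus type lemma) or carry out the mollification argument with the observation that it suffices to prove the integrated identity, for which weak convergence of $f_h^\alpha$ in $L^2(0,T;W^{1,2})$ together with the uniform bound from (C1) and a diagonal/monotonicity argument is enough. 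A secondary technical point is the treatment near $t=0$ and $t=T$, handled by choosing one‑sided mollifiers or by first proving the identity on compactly contained subintervals and then extending by the established continuity.
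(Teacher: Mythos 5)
Your proposal follows exactly the route the paper takes: the paper's entire proof of this lemma consists of the remark that \eqref{lm} is a special case of the Lions--Magenes lemma when $\alpha=1$, is trivial for smooth $f$, and that ``the general case can be established by suitable approximation,'' with all details omitted. Your time-mollification argument is precisely that approximation, and your identification of the convergence of $f_h^\alpha$ in $L^2(0,T;W^{1,2}(\Omega))$ as the technical crux --- together with the standard Alt--Luckhaus type resolution --- supplies more detail than the paper itself provides.
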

If $\alpha=1$, this lemma is a special case of the well known Lions-Magenes lemma.
Formula \eqref{lm} is trivial if $f$ is smooth. The general case can be established by suitable approximation. We shall omit the details.
%\begin{proof}Without loss of generality, assume that each function in the sequence $\{f_n\}$ is sufficiently smooth.  For $0\leq t_1<t_2\leq T$,
%	we have
%	\begin{eqnarray}
%	\io|f_n(x, t_2)-f_n(x, t_1)|^{\alpha+1}dx&\leq &\io\left|f_n^{\alpha+1}(x, t_2)-f_n^{\alpha+1}(x, t_1)\right|dx
%	\end{eqnarray}
\begin{lemma}[Lions-Aubin]\label{la}
	Let $X_0, X$ and $X_1$ be three Banach spaces with $X_0 \subseteq X \subseteq X_1$. Suppose that $X_0$ is compactly embedded in $X$ and that $X$ is continuously embedded in $X_1$. For $1 \leq p, q \leq \infty$, let
	\begin{equation*}
	W=\{u\in L^{p}([0,T];X_{0}): \partial_t u\in L^{q}([0,T];X_{1})\}.
	\end{equation*}
	%	{\displaystyle}
	Then:
	\begin{enumerate}
		\item[\textup{(i)}] If $p  < \infty$, then the embedding of W into $L^p([0, T]; X)$ is compact.
		\item[\textup{(ii)}] If $p  = \infty$ and $q  >  1$, then the embedding of W into $C([0, T]; X)$ is compact. 
	\end{enumerate}
\end{lemma}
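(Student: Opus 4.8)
The plan is to reduce everything to two classical ingredients: an interpolation (Ehrling) inequality forced by the compact embedding $X_0 \hookrightarrow X$, and the elementary fact that a function whose distributional time-derivative lies in $L^q([0,T];X_1)$ is absolutely continuous into $X_1$. First I would establish \emph{Ehrling's inequality}: for every $\eta>0$ there is $C_\eta$ with
\[
\|v\|_X \le \eta\,\|v\|_{X_0} + C_\eta\,\|v\|_{X_1}\qquad \text{for all } v\in X_0 .
\]
This is proved by contradiction: if it failed for some $\eta_0$, there would exist $v_n\in X_0$ with $\|v_n\|_X=1$ yet $1>\eta_0\|v_n\|_{X_0}+n\|v_n\|_{X_1}$, so $\{v_n\}$ is bounded in $X_0$ while $\|v_n\|_{X_1}\to 0$; the compactness of $X_0\hookrightarrow X$ extracts $v_n\to v$ in $X$ with $\|v\|_X=1$, whereas the continuity of $X\hookrightarrow X_1$ forces $v=0$, a contradiction. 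I would also record the representation $u(t)-u(s)=\int_s^t \partial_\tau u\,d\tau$ in $X_1$, valid because $\partial_t u\in L^q([0,T];X_1)\subseteq L^1([0,T];X_1)$, so that $u\in C([0,T];X_1)$.

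For part \textup{(ii)} ($p=\infty$, $q>1$) I would argue by Arzel\`a--Ascoli in $C([0,T];X)$. Let $\{u_n\}$ be bounded in $W$. The integral representation and H\"older's inequality give
\[
\|u_n(t)-u_n(s)\|_{X_1}\le |t-s|^{1/q'}\,\|\partial_t u_n\|_{L^q([0,T];X_1)}\le C\,|t-s|^{1/q'},
\]
where $q'$ is the conjugate exponent; here $q>1$ is exactly what produces a genuine modulus of continuity. Feeding this together with the uniform bound $\|u_n(t)\|_{X_0}\le M$ into Ehrling yields $\|u_n(t)-u_n(s)\|_X\le 2M\eta+C_\eta C|t-s|^{1/q'}$, so $\{u_n\}$ is uniformly equicontinuous into $X$. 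Since each slice $\{u_n(t)\}_n$ is bounded in $X_0$ and hence relatively compact in $X$, Arzel\`a--Ascoli produces a subsequence converging in $C([0,T];X)$.

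For part \textup{(i)} ($p<\infty$) I would invoke the vector-valued Riesz--Fr\'echet--Kolmogorov criterion in $L^p([0,T];X)$: a bounded family is relatively compact provided \textup{(a)} the local time-averages $\int_{t_1}^{t_2}u_n\,dt$ lie in a relatively compact subset of $X$ for all $0<t_1<t_2<T$, and \textup{(b)} the time-translations are uniformly equicontinuous, $\sup_n\|u_n(\cdot+h)-u_n\|_{L^p([0,T-h];X)}\to 0$ as $h\to 0$. Condition \textup{(a)} is immediate, since $\|\int_{t_1}^{t_2}u_n\,dt\|_{X_0}\le (t_2-t_1)^{1/p'}\|u_n\|_{L^p([0,T];X_0)}$ is bounded and the embedding $X_0\hookrightarrow X$ is compact. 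For \textup{(b)} I would first use the representation and Fubini to obtain the coarse estimate $\|u_n(\cdot+h)-u_n\|_{L^1([0,T-h];X_1)}\le h\,\|\partial_t u_n\|_{L^1([0,T];X_1)}\le Ch$, and then upgrade from $X_1$ to $X$ via Ehrling applied pointwise in $t$, absorbing the $\eta$-term with the uniform $L^p(X_0)$-bound on the translations.

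The main obstacle is precisely this last upgrade in part \textup{(i)}. Because $X\hookrightarrow X_1$ is only continuous, control of the translation differences in $X_1$ does not transfer to the intermediate norm for free; and when $p>q$ the difference quotient is small only in $L^1([0,T-h];X_1)$, not in $L^p([0,T-h];X_1)$, so a naive pointwise application of Ehrling leaves an uncontrolled $L^p(X_1)$-term. I would resolve this by interpolating the translation differences between the $L^1([0,T-h];X_1)$ bound (which tends to $0$) and the uniform $L^p([0,T];X_0)$ bound, or equivalently by comparing $u_n$ with its Steklov time-average and using condition \textup{(a)} to handle the regularized part; this delivers the uniform $L^p(X)$-equicontinuity and closes the argument. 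Everything else is routine.
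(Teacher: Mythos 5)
Your overall architecture is sound and, in fact, coincides with the source the paper actually uses: the paper gives no proof of this lemma, citing Simon~[S] instead, and your combination of Ehrling's inequality with the vector-valued Riesz--Fr\'echet--Kolmogorov criterion (conditions (a) and (b)) is precisely Simon's route. Your contradiction proof of Ehrling is correct, and part (ii) is complete up to a routine point worth one line: the bound $\|u_n(t)\|_{X_0}\le M$ holds a priori only for a.e.\ $t$, but since $u_n\in C([0,T];X_1)$, every slice $u_n(t)$ is an $X$-limit of slices satisfying the bound, so $\{u_n(t)\}_n$ lies in the $X$-closure of the $M$-ball of $X_0$, which is compact; the equicontinuity estimate then passes to all $t,s$ by the same approximation. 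Note also that when $q>1$, your pointwise estimate $\|u_n(t+h)-u_n(t)\|_{X_1}\le h^{1/q'}\|\partial_t u_n\|_{L^q([0,T];X_1)}$ is uniform in $t$, so the translations are already small in $L^p([0,T-h];X_1)$ and Ehrling closes part (i) with no further work: the only delicate case is $q=1$, which the statement does include.

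In that case $q=1$ your proposed repair has a genuine gap. Interpolating the translation difference $g_h=u_n(\cdot+h)-u_n$ between the bound $\|g_h\|_{L^1(X_1)}\le Ch$ and the uniform $L^p(X_0)$ (hence $L^p(X_1)$) bound cannot reach the exponent $p$: H\"older gives $\|g_h\|_{L^r(X_1)}\le \|g_h\|_{L^1(X_1)}^{\theta}\|g_h\|_{L^p(X_1)}^{1-\theta}$ with $\theta>0$ only for $r<p$, and at $r=p$ the inequality is vacuous ($\theta=0$), so as written you prove compactness in $L^r([0,T];X)$ for every $r<p$ but not in $L^p([0,T];X)$. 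The Steklov variant has the same hole, since the error $u_n-u_n^h$ must again be measured in $L^p(X_1)$. The missing ingredient is one you already recorded but did not exploit: since $\partial_t u_n\in L^1([0,T];X_1)$ and $u_n\in L^p([0,T];X_1)$, averaging the identity $u_n(t)=u_n(s)+\int_s^t\partial_\tau u_n\,d\tau$ over $s\in[0,T]$ yields the \emph{uniform} bound
\begin{equation*}
\sup_{0\le t\le T}\|u_n(t)\|_{X_1}\le \frac{1}{T}\|u_n\|_{L^1([0,T];X_1)}+\|\partial_t u_n\|_{L^1([0,T];X_1)}\le C,
\end{equation*}
i.e., the family is bounded in $C([0,T];X_1)$. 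Interpolating in the \emph{fixed} space $X_1$ between $L^1$ and $L^\infty$ then gives $\|g_h\|_{L^p(X_1)}\le \|g_h\|_{L^1(X_1)}^{1/p}\,\|g_h\|_{L^\infty(X_1)}^{1-1/p}\le (Ch)^{1/p}(2C)^{1-1/p}\to 0$ uniformly in $n$, and your Ehrling absorption with the $L^p(X_0)$ bound delivers condition (b) at the exponent $p$ itself. With this single correction your argument becomes a complete proof along Simon's lines.
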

The proof of this lemma can be found in \cite{S}.

Our approximate problem is:
\begin{eqnarray}
\partial_tw-\gamma\mdiv\left[(\mu\uo+\nu\ut) w^{\gamma-1}\nabla w\right]-\varepsilon\Delta w&=&R_1+R_2\equiv R\ \ \mbox{in $\ot$,}\label{aew}\\
\partial_t\uo-\gamma\mu\mdiv\left(\uo  w^{\gamma-1}\nabla w\right)-\varepsilon\Delta \uo&=& R_1\ \mbox{in $\ot$,}\label{aeuo}\\
\partial_t\ut-\gamma\nu\mdiv\left(\ut  w^{\gamma-1}\nabla w\right)-\varepsilon\Delta \ut&=& R_2\ \mbox{in $\ot$,}\label{aeut}\\
\nabla w\cdot\mathbf{n}=\nabla \uo\cdot\mathbf{n}&=&\nabla \ut\cdot\mathbf{n}=0\ \ \mbox{on $\Sigma_T$,}\label{autb}\\
(w,\uo,\ut)\mid_{t=0}&=&(w^{(0)}(x),\uo^{(0)}(x),\ut^{(0)}(x) )\nonumber\\  &&\mbox{on $\Omega$,}\label{auicon}
\end{eqnarray}
where $\varepsilon>0$ and
\begin{equation*}
w^{(0)}(x)= \uo^{(0)}(x)+\ut^{(0)}(x).
\end{equation*}
Before we state our existence theorem, we let
\begin{equation*}
W(0,T)=\left\{\omega\in L^2(0,T; W^{1,2}(\Omega)): \partial_t \omega\in L^2\left(0,T; \left(W^{1,2}(\Omega)\right)^*\right)\right\}.
\end{equation*}
%where $\left(W^{1,2}(\Omega)\right)^*$ denotes the dual space of $W^{1,2}(\Omega)$. 
We mention in passing that it is not difficult to derive from Lemma \ref{lm} 
that $W(0,T)$ is contained in $C([0,T]; L^2(\Omega))$.
\begin{theorem}\label{athm} Let (H1)-(H3) be satisfied. Then there exists a triplet $(\uo, \ut, w)$ in the function space $\left(W(0,T)\right)^3$ such that
	\begin{enumerate}
		\item[\textup{(1)}] $\uo\geq 0,\ut\geq 0$, and $ w=\uo+\ut$ with $w\leq w_p$;
		\item[\textup{(2)}]	Equations \eqref{aew}-\eqref{auicon} are all satisfied in the usual weak sense.
	\end{enumerate}
\end{theorem}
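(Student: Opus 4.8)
The plan is to prove existence for a uniformly parabolic, \emph{truncated} version of \eqref{aew}--\eqref{auicon} by a Galerkin scheme, and then to recover the original approximate problem by showing that the a priori bounds make every truncation inactive. Throughout, it is the regularizing term $\varepsilon\Delta$ that renders the system uniformly parabolic and supplies the coercivity for all the energy estimates. To set up the truncation, write $T_M(s)=\min\{s^+,M\}$ with $s^+=\max\{s,0\}$, and replace, in the flux coefficients, $\uo,\ut$ by $T_{w_p}(\uo),T_{w_p}(\ut)$ and $w^{\gamma-1}$ by $\big(T_{w_p}(w)\big)^{\gamma-1}$, while in $R_1,R_2$ we replace $\uo,\ut$ by $T_{w_p}(\uo),T_{w_p}(\ut)$ and evaluate $F_i,G_i$ at $T_{w_p}(w)\in[0,w_p]$. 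Since $F_i,G_i$ are continuous by (H1) and the truncations are bounded and Lipschitz, all coefficients of the truncated system are bounded and continuous.

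I would solve the truncated system by a Galerkin approximation in the eigenfunctions of the Neumann Laplacian. Existence for the resulting ODE system follows from Peano's theorem (the right-hand sides are continuous and bounded on bounded sets), and global existence follows from the energy estimates. Testing the $w$-equation with $w$ and using that the truncated, nonnegative principal coefficient makes $\gamma\io(\cdots)|\nabla w|^2\,dx\ge0$, one obtains, via Gronwall, a bound on $w$ in $L^\infty(0,T;L^2(\Omega))\cap L^2(0,T;W^{1,2}(\Omega))$; here the boundedness of the truncated reaction is essential. Feeding $\|\nabla w\|_{L^2(\ot)}$ into the estimates for $\uo,\ut$ (the cross term $\gamma\mu\io T_{w_p}(\uo)\big(T_{w_p}(w)\big)^{\gamma-1}\nabla w\cdot\nabla\uo\,dx$ is absorbed by $\tfrac{\varepsilon}{2}\io|\nabla\uo|^2\,dx$ through Young's inequality) yields the same bounds for $\uo,\ut$. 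These are uniform in the Galerkin index; the equations then bound $\pt\uo,\pt\ut,\pt w$ in $L^2\big(0,T;(W^{1,2}(\Omega))^*\big)$, and Lemma \ref{la} supplies the compactness needed to pass to the limit and produce a weak solution $(\uo,\ut,w)\in\big(W(0,T)\big)^3$ of the truncated problem.

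The identity $w=\uo+\ut$ comes for free: the sum of the truncated fluxes of \eqref{aeuo} and \eqref{aeut} equals the truncated flux of \eqref{aew}, and $R_1+R_2=R$, so $v:=\uo+\ut-w$ satisfies $\pt v-\varepsilon\Delta v=0$ with Neumann data and $v(\cdot,0)=\uo^{(0)}+\ut^{(0)}-w^{(0)}=0$, whence $v\equiv0$. For nonnegativity I would test \eqref{aeuo} with $[\uo]_-=\min\{\uo,0\}$, using Lemma \ref{comt} for the time term: on $\{\uo<0\}$ the truncated flux vanishes (there $T_{w_p}(\uo)=0$), so only the reaction survives, and $R_1=T_{w_p}(\ut)G_1\big(T_{w_p}(w)\big)\ge0$ because $T_{w_p}(w)\in[0,w_p]$ and $E\ge0$ forces $G_1\ge0$ there by (H1). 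With $\uo^{(0)}\ge0$ from (H3) this gives $\io([\uo]_-)^2dx\le0$, i.e. $\uo\ge0$, and likewise $\ut\ge0$. For the upper bound, test \eqref{aew} with $[w-w_p]_+$: the drift and $\varepsilon\Delta$ terms carry the favorable sign, while on $\{w>w_p\}$ we have $R=T_{w_p}(\uo)F(w_p)+T_{w_p}(\ut)G(w_p)\le0$ by \eqref{fgc1}--\eqref{fgc2}; since $w^{(0)}\le w_p$ by (H3) this forces $w\le w_p$. Together these give $0\le\uo,\ut\le w\le w_p$, so every truncation is inactive and $(\uo,\ut,w)$ solves the original approximate problem \eqref{aew}--\eqref{auicon}.

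The delicate point is the interdependence of the sign conditions in the last step: nonnegativity of $\uo$ relies on $G_1\ge0$, which holds only on $[0,w_p]$, whereas $w\le w_p$ is itself a separate estimate. The truncation is precisely what decouples these arguments, a priori forcing the relevant quantities into $[0,w_p]$, after which (H1) makes each test-function computation close with the correct sign. Verifying that the truncated coefficients are regular enough to pass to the Galerkin limit, and that the cross term in the $\uo,\ut$ estimates is controlled by $\nabla w$, are the remaining points needing care, and both are resolved by the coercivity furnished by $\varepsilon\Delta$.
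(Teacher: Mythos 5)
Your proposal is correct, and its backbone coincides with the paper's: your $T_{w_p}$ is exactly the cutoff $\theta_p$ of \eqref{thdf}, and your closing arguments --- subtracting the equations so that $v=\uo+\ut-w$ solves the heat equation with zero data, testing with $[\uo]_-$ to get nonnegativity from $G_1\geq 0$ on $[0,w_p]$, and testing with $(w-w_p)^+$ to get the upper bound from \eqref{fgc1}--\eqref{fgc2} --- reproduce almost verbatim the computations in the paper's a priori estimate lemma (the one bounding fixed points of $\sigma\mathbb{M}$). Where you genuinely diverge is the existence mechanism for the truncated--regularized system. The paper does not truncate the unknowns in place; it freezes them, defining $\mathbb{M}(\omega,v_1,v_2)=(w,\uo,\ut)$ by solving three \emph{linear} uniformly parabolic problems in succession (so existence and uniqueness come from classical linear theory), and then applies Leray--Schauder; the cost is verifying that $\mathbb{M}$ is continuous and compact, and running the sign arguments for the whole family $\sigma\mathbb{M}$, $\sigma\in(0,1)$. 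Your Galerkin route dispenses with the fixed-point machinery but must face the quasilinear coupling directly when passing to the limit in $T_{w_p}(\uo)\bigl(T_{w_p}(w)\bigr)^{\gamma-1}\nabla w$; this does work, for the same reason the paper's continuity proof of $\mathbb{M}$ works (bounded continuous composites converge strongly along a.e.-convergent subsequences, while $\nabla w$ converges weakly), provided you take the time-derivative bounds in the dual of the Galerkin subspaces using the $W^{1,2}$-boundedness of the eigenfunction projections. Two minor points of care, neither fatal: $\bigl(T_{w_p}(s)\bigr)^{\gamma-1}$ is only H\"older, not Lipschitz, near $s=0$ when $\gamma<2$ (harmless, since Peano needs only continuity and uniqueness of the ODE system is not required); and $G_1\geq 0$ is asserted by (H1) only on $[0,w_p)$, so you must extend it to the closed endpoint by continuity of $G_1$ --- an issue the paper's own proof shares.
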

This theorem 
%Existence of a weak solution to \eqref{aew}-\eqref{auicon} 
will be established via the Leray-Schauder fixed point theorem (\cite{GT}, p.280). For this purpose,
we introduce the function
\begin{equation}\label{thdf}
\theta_p(s)=\left\{\begin{array}{cc}
0&\mbox{if $s\leq 0$,}\\
s&\mbox{if $0<s<w_p$,}\\
w_p&\mbox{if $s\geq w_p$,}
\end{array}\right.
\end{equation}
where $w_p$ is given as in (H1).
%for each $\varepsilon\in (0,1)$.L^2\left(0,T;\left(W^{1,2}(\Omega)\right)^2\right)
We define an operator $\mathbb{M}$ from $\left(L^2(\ot)\right)^3$ into itself as follows: Let $(\omega,v_1,v_2)\in \left(L^2(\ot)\right)^3 $. We first consider the initial boundary value problem
\begin{eqnarray}
\partial_tw&=&\gamma\mdiv\left[(\mu\theta_p(v_1)+\nu\theta_p(v_2))(\theta_p(v_1)+\theta_p(v_2))^{\gamma-1}\nabla w\right]+\varepsilon\Delta w\nonumber\\
&&+\theta_p(v_1) F(\theta_p(\omega))+\theta_p(v_2) G(\theta_p(\omega))\ \mbox{in $\ot$,}\label{aew1}\\
\nabla w\cdot\mathbf{n}&=&0\ \ \mbox{on $\Sigma_T$,}\\
w(x,0)&=&w^{(0)}(x)\ \ \mbox{on $\Omega$. }\label{aewi}
\end{eqnarray}
%Recall that $F,G$ are bounded functions. 
For given $(\omega,v_1,v_2)$ the equation \eqref{aew1} is linear and uniformly parabolic in $w$. Thus we can conclude from the classical result (\cite{LSU}, Chap. III) that there is a unique weak solution $w$ to \eqref{aew1}-\eqref{aewi} in the space $W(0,T)$.
% owing to the well known Aubin–Lions lemma. compactly embedded
Use the function $w$ so obtained to form the following two initial boundary problems 
\begin{eqnarray}
\partial_t\uo-\varepsilon\Delta \uo&=&\gamma\mu\mdiv\left[\theta_p(v_1)(\theta_p(v_1)+\theta_p(v_2))^{\gamma-1}\nabla w\right]\nonumber\\
&&+\theta_p(v_1)\fo(\theta_p(\omega))+\theta_p(v_2)\go(\theta_p(\omega))\ \mbox{in $\ot$,}\label{aeuo1}\\
\nabla \uo\cdot\mathbf{n}&=&0\ \ \mbox{on $\Sigma_T$,}\\
\uo(x,0)&=&\uo^{(0)}(x) \ \mbox{on $\Omega$, }\\
\partial_t\ut-\varepsilon\Delta \ut&=&\gamma\nu\mdiv\left[\theta_p(v_2)(\theta_p(v_1)+\theta_p(v_2))^{\gamma-1}\nabla w\right]\nonumber\\
&&+\theta_p(v_1)\ft(\theta_p(\omega))+\theta_p(v_2)\gt(\theta_p(\omega))\ \mbox{in $\ot$,}\label{aeut1}\\
\nabla \ut\cdot\mathbf{n}&=&0\ \ \mbox{on $\Sigma_T$,}\label{autb1}\\
\ut(x,0)&=&\ut^{(0)}(x) \ \ \mbox{on $\Omega$. }\label{auicon1 }
\end{eqnarray}
Each of the two problems here has a unique solution in $W(0,T)$. 
We define $(w,\uo,\ut)=\mathbb{M}(\omega,v_1,v_2)$.
% if $w,\uo,\ut$ are the respective weak solutions of the following initial boundary value problems
%Note that each equation in the preceding problems is linear and uniformly parabolic. Thus 
Evidently, $\mathbb{M}$ is well-defined.
\begin{lemma} For each fixed $\varepsilon>0$,
	the operator $\mathbb{M}$ is compact, i.e., $\mathbb{M}$ is continuous and maps bounded sets into precompact ones.
\end{lemma}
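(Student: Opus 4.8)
The plan is to argue in the standard two stages for the compactness of a fixed-point map: first show that $\mathbb{M}$ carries all of $(L^2(\ot))^3$ into a fixed bounded subset of $(W(0,T))^3$, then use Lemma \ref{la} to turn boundedness in $W(0,T)$ into precompactness in $L^2(\ot)$, and finally establish continuity by a subsequence-plus-uniqueness argument.

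For the boundedness, the crucial observation is that the truncation $\theta_p$ renders every coefficient and source term in \eqref{aew1}, \eqref{aeuo1} and \eqref{aeut1} uniformly bounded, independently of the input $(\omega,v_1,v_2)$. Indeed, since $0\le\theta_p(\cdot)\le w_p$, the diffusion coefficient $\gamma(\mu\theta_p(v_1)+\nu\theta_p(v_2))(\theta_p(v_1)+\theta_p(v_2))^{\gamma-1}+\varepsilon$ lies between $\varepsilon$ and a fixed constant (uniform parabolicity supplied by the $\varepsilon\Delta$ term), while $F,G,\fo,\ft,\go,\gt$ evaluated at $\theta_p(\omega)\in[0,w_p]$ are bounded by their maxima on $[0,w_p]$. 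Testing the weak form of \eqref{aew1} with $w$ and using these $L^\infty$ bounds yields, by the usual energy estimate, a bound for $\|w\|_{L^2(0,T;W^{1,2}(\Omega))}$ depending only on $\varepsilon$, $w_p$, fixed constants and $\|w^{(0)}\|_{L^2(\Omega)}$; testing against an arbitrary $W^{1,2}(\Omega)$ function then bounds $\partial_tw$ in $L^2(0,T;(W^{1,2}(\Omega))^*)$, so $w$ is bounded in $W(0,T)$. In particular $\nabla w$ is bounded in $L^2(\ot)$, whence the flux $\theta_p(v_1)(\theta_p(v_1)+\theta_p(v_2))^{\gamma-1}\nabla w$ in \eqref{aeuo1} is bounded in $L^2(\ot)$, and the same energy estimate for the heat equations \eqref{aeuo1} and \eqref{aeut1} bounds $\uo,\ut$ in $W(0,T)$. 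Taking $X_0=W^{1,2}(\Omega)$, $X=L^2(\Omega)$, $X_1=(W^{1,2}(\Omega))^*$ (the first embedding compact by Rellich--Kondrachov on the bounded Lipschitz domain $\Omega$), Lemma \ref{la}(i) with $p=q=2$ shows $W(0,T)$ embeds compactly into $L^2(\ot)$, so the image of any bounded set is precompact.

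For continuity, let $(\omega^{(n)},v_1^{(n)},v_2^{(n)})\to(\omega,v_1,v_2)$ in $(L^2(\ot))^3$ and write $(w^{(n)},\uo^{(n)},\ut^{(n)})=\mathbb{M}(\omega^{(n)},v_1^{(n)},v_2^{(n)})$. By the bounds above these are bounded in $(W(0,T))^3$, so along a subsequence they converge weakly in $(W(0,T))^3$ and, by the compact embedding, strongly in $(L^2(\ot))^3$ to some $(w^*,\uo^*,\ut^*)$. Passing to a further subsequence we may assume $v_i^{(n)}\to v_i$ and $\omega^{(n)}\to\omega$ a.e.; continuity and boundedness of $\theta_p$ and of $F,G,\fo,\ft,\go,\gt$ then give, via dominated convergence, strong $L^2(\ot)$ convergence of every truncated coefficient and source term to its natural limit. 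In the weak form of \eqref{aew1} the only delicate term, $\ioT(\mu\theta_p(v_1^{(n)})+\nu\theta_p(v_2^{(n)}))(\theta_p(v_1^{(n)})+\theta_p(v_2^{(n)}))^{\gamma-1}\nabla w^{(n)}\cdot\nvp\,dxdt$, converges because the coefficient times $\nvp$ converges strongly in $L^2(\ot)$ while $\nabla w^{(n)}\rightharpoonup\nabla w^*$ weakly in $L^2(\ot)$. Hence $w^*$ solves \eqref{aew1}--\eqref{aewi} with data $(\omega,v_1,v_2)$, so $w^*=w$ by uniqueness. The identical weak-strong argument, now with $\nabla w^{(n)}\rightharpoonup\nabla w$, passes to the limit in the divergence terms of \eqref{aeuo1} and \eqref{aeut1}, giving $\uo^*=\uo$ and $\ut^*=\ut$ by uniqueness. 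Since every subsequence has a further subsequence converging to the same $\mathbb{M}(\omega,v_1,v_2)$, the full sequence converges, proving continuity.

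The main obstacle is this continuity step, and within it the passage to the limit in the flux/diffusion terms, which are products of a weakly convergent gradient with a coefficient converging only a.e. The truncation $\theta_p$ is exactly what rescues the argument: it makes the coefficients uniformly bounded, so dominated convergence upgrades their a.e. convergence to strong $L^2$ convergence and the weak-strong pairing then applies, while uniqueness of the underlying linear problems pins down the limit and promotes subsequential convergence to convergence of the whole sequence.
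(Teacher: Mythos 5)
Your proposal is correct and follows essentially the same route as the paper: boundedness of the image in $W(0,T)$ via energy estimates made possible by the truncation $\theta_p$, the Lions--Aubin lemma for precompactness in $L^2(\ot)$, and a subsequence-plus-uniqueness argument for continuity in which strong convergence of the bounded truncated coefficients is paired against weak convergence of the gradients. The only cosmetic difference is that you upgrade a.e.\ convergence of the coefficients to strong $L^p$ convergence by dominated convergence where the paper invokes Egoroff's theorem; these are interchangeable here.
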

\begin{proof} It is not difficult for us to see that the range of $\mathbb{M}$ is a bounded set in $W(0,T)$. Thus we can conclude from Lemma \ref{la} that  $\mathbb{M}$  maps bounded sets into precompact ones. The continuity of $\mathbb{M}$ is based upon the observation that
	% We will use the well known fact that 
	if any subsequence of a sequence has a further convergent subsequence and all its convergent subsequences have the same limit then the whole sequence converges. Suppose
	\begin{equation}\label{ra}
	(\vok,\vtk,\ok)\rightarrow (\vo,\vt,\omega)\ \ \mbox{strongly in $\left(L^2(\ot)\right)^3$.}
	\end{equation}
	Then for any bounded continuous function $H$ on $\mathbb{R}$ we have
	\begin{eqnarray}
	(H(\vok),H(\vtk),H(\ok))&\rightarrow& (H(\vo),H(\vt),H(\omega))\nonumber\\
	&& \mbox{strongly in $\left(L^p(\ot)\right)^3$ for each $p\geq 1$.}\label{ra1}
	\end{eqnarray}
	To see this, we can conclude from \eqref{ra} that there is a subsequence $\{\vo^{(k_j)}\}$ of $\{\vok\}$ such that
	\begin{equation*}
	\vo^{(k_j)}\rightarrow \vo\ \ \mbox{a.e. on $\ot$.}
	\end{equation*}
	Subsequently, 
	\begin{equation*}
	H(\vo^{(k_j)})\rightarrow H(\vo)\ \ \mbox{a.e. on $\ot$.}
	\end{equation*}
	This combined with Egoroff's theorem implies that
	\begin{equation*}
	H(\vo^{(k_j)})\rightarrow H(\vo)\ \ \mbox{strongly in $\left(L^p(\ot)\right)^3$ for each $p\geq 1$.}
	\end{equation*}
	That is, any subsequence of $\{	H(\vok)\}$ has a further subsequence which converges to $H(\vo)$. Thus the whole sequence converges to $H(\vo)$. Hence \eqref{ra1} follows.
	
	We have
	\begin{eqnarray*}
		\theta_p(v_i^{(k)})&\rightarrow&\theta_p(v_i),\\  F_i(\theta_p(\ok))&\rightarrow& F_i(\theta_p(\omega)),\\  G_i(\theta_p(\ok))&\rightarrow& G_i(\theta_p(\omega))\nonumber\\
		&&  \mbox{stronly in $\left(L^p(\ot)\right)^3$ for each $p\geq 1$ and $i=1,2$.}
	\end{eqnarray*}
	Set
	\begin{equation*}
	(\uok,\utk,\wk)=\mathbb{M}(\vok,\vtk,\ok).
	\end{equation*}
	That is,
	\begin{eqnarray}
	\partial_t\wk&=&\gamma\mdiv\left[(\mu\theta_p(\vok)+\nu\theta_p(\vtk))\left(\theta_p(\vok)+\theta_p(\vtk)\right)^{\gamma-1}\nabla \wk\right]+\varepsilon\Delta \wk\nonumber\\
	&&+\theta_p(\vok) F(\theta_p(\ok))+\theta_p(\vtk) G(\theta_p(\ok))\ \mbox{in $\ot$,}\label{ra2}\\
	\nabla \wk\cdot\mathbf{n}&=&0\ \ \mbox{on $\Sigma_T$,}\\
	\wk(x,0)&=& w^{(0)}(x)\ \ \mbox{on $\Omega$, }\\
	\partial_t\uok&=&\gamma\mu\mdiv\left[\theta_p(\vok)\left(\theta_p(\vok)+\theta_p(\vok)\right)^{\gamma-1}\nabla \wk\right]+\varepsilon\Delta \uok\nonumber\\
	&&+\theta_p(\vok)\fo(\theta_p(\ok))+\theta_p(\vtk)\go(\theta_p(\ok))\ \mbox{in $\ot$,}\label{rauo2}\\
	\nabla \uok\cdot\mathbf{n}&=&0\ \ \mbox{on $\Sigma_T$,}\\
	\uok(x,0)&=&\uo^{(0)}(x) \ \mbox{on $\Omega$, }\\
	\partial_t\utk&=&\gamma\nu\mdiv\left[\theta_p(\vtk)\left(\theta_p(\vok)+\theta_p(\vtk)\right)^{\gamma-1}\nabla \wk\right]+\varepsilon\Delta \utk\nonumber\\
	&&+\theta_p(\vok)\ft(\theta_p(\ok))+\theta_p(\vtk)\gt(\theta_p(\ok))\ \mbox{in $\ot$,}\label{raeut2}\\
	\nabla \utk\cdot\mathbf{n}&=&0\ \ \mbox{on $\Sigma_T$,}\label{rautb2}\\
	\utk(x,0)&=&\ut^{(0)}(x) \ \ \mbox{on $\Omega$. }\label{rauicon2 }
	\end{eqnarray}
	Using $\wk$ as a test function in \eqref{ra2}, we can easily show that
	$\{\wk\}$ is bounded in $W(0,T)$. This together with \eqref{rauo2} and \eqref{raeut2} implies that $\{\uok\}$ and $\{\utk\}$ are also bounded in $W(0,T)$. By Lemma \ref{la}, we can extract a subsequence of $\{(\uok,\utk,\wk)\}$, still denoted by $\{(\uok,\utk,\wk)\}$, such that
	\begin{equation*}
	(\uok,\utk,\wk)\rightarrow (\uo,\ut, w)\ \ \mbox{stronly in $L^2(\ot)$.}
	\end{equation*}
	Furthermore,
	\begin{eqnarray*}
		\nabla\uok&\rightarrow&\nabla\uo\ \ \ \mbox{weakly in $L^2(0,T, \left(L^2(\Omega)\right)^d)$},\\
		\nabla\utk&\rightarrow&\nabla\ut\ \ \ \mbox{weakly in $L^2(0,T, \left(L^2(\Omega)\right)^d)$},\ \ \mbox{and}\\
		\nabla\wk&\rightarrow&\nabla w\ \ \ \mbox{weakly in $L^2(0,T, \left(L^2(\Omega)\right)^d)$}.
	\end{eqnarray*}
	Thus we can pass to the limit in \eqref{ra2}-\eqref{rauicon2 } to derive
	\begin{eqnarray}
	\partial_tw&=&\gamma\mdiv\left[(\mu\theta_p(\vo)+\nu\theta_p(\vt))\left(\theta_p(\vo)+\theta_p(\vt)\right)^{\gamma-1}\nabla w\right]+\varepsilon\Delta w\nonumber\\
	&&+\theta_p(\vo) F(\theta_p(\omega))+\theta_p(\vt) G(\theta_p(\omega))\ \mbox{in $\ot$,}\label{ra3}\\
	\nabla w\cdot\mathbf{n}&=&0\ \ \mbox{on $\Sigma_T$,}\\
	w(x,0)&=& w^{(0)}(x)\ \ \mbox{on $\Omega$, }\label{ra33}\\
	\partial_t\uo&=&\gamma\mu\mdiv\left[\theta_p(\vo)\left(\theta_p(\vo)+\theta_p(\vo)\right)^{\gamma-1}\nabla w\right]+\varepsilon\Delta \uo\nonumber\\
	&&+\theta_p(\vo)\fo(\theta_p(\omega))+\theta_p(\vt)\go(\theta_p(\omega))\ \mbox{in $\ot$,}\label{rauo3}\\
	\nabla \uo\cdot\mathbf{n}&=&0\ \ \mbox{on $\Sigma_T$,}\\
	\uo(x,0)&=&\uo^{(0)}(x) \ \mbox{on $\Omega$, }\\
	\partial_t\ut&=&\gamma\nu\mdiv\left[\theta_p(\vt)\left(\theta_p(\vo)+\theta_p(\vt)\right)^{\gamma-1}\nabla w\right]+\varepsilon\Delta \ut\nonumber\\
	&&+\theta_p(\vo)\ft(\theta_p(\omega))+\theta_p(\vt)\gt(\theta_p(\omega))\ \mbox{in $\ot$,}\label{raeut3}\\
	\nabla \ut\cdot\mathbf{n}&=&0\ \ \mbox{on $\Sigma_T$,}\label{rautb3}\\
	\ut(x,0)&=&\ut^{(0)}(x) \ \ \mbox{on $\Omega$. }\label{rauicon3 }
	\end{eqnarray}
	The solution $w$ to \eqref{ra3}-\eqref{ra33} is unique, and $\uo$ and $\ut$ are uniquely determined by  $w$. That is, there is only one solution to \eqref{ra3}-\eqref{rauicon3 }. This means that any subsequence of $(\uok,\utk,\wk)$ has the same limit $\mathbb{M}(\vo,\vt,\omega)$. Thus the whole sequence also converges to it. This completes the proof. 
\end{proof}
\begin{lemma}
	There is a positive number $c$ such that
	\begin{equation}\label{lse}
	\|(w,\uo,\ut)\|_{\left(L^2(\ot)\right)^3}\leq c
	\end{equation}
	for all $(w,\uo,\ut)\in \left(L^2(\ot)\right)^3$ and $\sigma\in (0,1)$ satisfying
	\begin{equation*}
	(w,\uo,\ut)	=\sigma\mathbb{M}(w,\uo,\ut).
	\end{equation*}
\end{lemma}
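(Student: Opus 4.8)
The plan is to read off the system of partial differential equations encoded by the relation $(w,\uo,\ut)=\sigma\mathbb{M}(w,\uo,\ut)$ and then run standard energy estimates, exploiting the fact that the truncation $\theta_p$ renders every coefficient and source term bounded. Writing $(\bar w,\bar u_1,\bar u_2)=\mathbb{M}(w,\uo,\ut)$, the hypothesis says $w=\sigma\bar w$, $\uo=\sigma\bar u_1$, $\ut=\sigma\bar u_2$; substituting $\bar w=w/\sigma$ into \eqref{aew1} and multiplying by $\sigma$ shows that $w$ solves \eqref{aew1} with the reaction term scaled by $\sigma$ and with initial datum $\sigma w^{(0)}$, and likewise $\uo$ solves \eqref{aeuo1} (with the corresponding scaling) and similarly for $\ut$. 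Since $0\le\theta_p\le w_p$ and $F_i,G_i$ are continuous, hence bounded on $[0,w_p]$, all reaction terms are bounded in $L^\infty(\ot)$ by a constant independent of $\sigma$ (recall $\sigma<1$), and the initial data are dominated by $w^{(0)},\uo^{(0)},\ut^{(0)}$.

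First I would estimate $w$. Using $w$ as a test function and invoking Lemma \ref{comt} with $\alpha=1$ to write $\langle\partial_t w,w\rangle=\frac12\frac{d}{dt}\io w^2dx$, the divergence term produces $\gamma\io(\mu\theta_p(\uo)+\nu\theta_p(\ut))(\theta_p(\uo)+\theta_p(\ut))^{\gamma-1}|\nabla w|^2dx\ge0$ because every factor is non-negative, while the regularization contributes $\varepsilon\io|\nabla w|^2dx$. Bounding the reaction term by $C\io|w|dx\le C|\Omega|^{1/2}\|w\|_{L^2(\Omega)}$, a differential-inequality argument (indeed $(\sqrt{y})'\le C$ for $y(t)=\io w^2dx$) yields a bound on $\sup_{[0,T]}\|w(\cdot,t)\|_{L^2(\Omega)}$ depending only on the data and $T$, and then, integrating the energy identity in time and keeping the $\varepsilon$-term, a bound on $\int_0^T\io|\nabla w|^2\,dxdt$. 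Both bounds are uniform in $\sigma\in(0,1)$.

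The delicate point — and the one I expect to be the main obstacle — is the equation for $\uo$ (and symmetrically $\ut$), because the drift there is $\gamma\mu\,\mdiv[\theta_p(\uo)(\theta_p(\uo)+\theta_p(\ut))^{\gamma-1}\nabla w]$, carrying $\nabla w$ rather than $\nabla\uo$. Testing with $\uo$ produces the indefinite cross term $-\gamma\mu\io\theta_p(\uo)(\theta_p(\uo)+\theta_p(\ut))^{\gamma-1}\nabla w\cdot\nabla\uo\,dx$, which is not sign-definite. I would control it by Young's inequality: since the coefficient is bounded by $K=\gamma\mu w_p(2w_p)^{\gamma-1}$, this term is at most $\frac{\varepsilon}{2}\io|\nabla\uo|^2dx+\frac{K^2}{2\varepsilon}\io|\nabla w|^2dx$, the first half being absorbed into the regularizing term $\varepsilon\io|\nabla\uo|^2dx$ on the left, and the second being integrable in time with uniformly bounded norm thanks to the $w$-gradient estimate from the previous step. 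This is precisely where the artificial viscosity $\varepsilon>0$ is indispensable; the bound degenerates as $\varepsilon\to0$, which is harmless since $\varepsilon$ is fixed throughout this lemma. With the remaining reaction terms bounded as before, Gronwall's inequality gives uniform bounds on $\sup_{[0,T]}\|\uo(\cdot,t)\|_{L^2(\Omega)}$ and $\sup_{[0,T]}\|\ut(\cdot,t)\|_{L^2(\Omega)}$, and integrating in $t$ delivers \eqref{lse}.
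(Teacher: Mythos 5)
Your argument is correct and does establish \eqref{lse}, but it takes a genuinely different route from the paper's. You run a pure energy method: test the $w$-equation with $w$ (the degenerate diffusion term is non-negative since $\theta_p\ge 0$), obtain $\sup_t\|w\|_{L^2(\Omega)}\le c$ and, from the viscosity term, $\|\nabla w\|_{L^2(\ot)}\le c/\sqrt{\varepsilon}$; then test the $u_i$-equations with $u_i$ and absorb the indefinite cross term into $\varepsilon\io|\nabla u_i|^2dx$ via Young's inequality. This uses only the boundedness of the truncated coefficients and of $F_i,G_i$ on $[0,w_p]$, and nothing about their signs, at the price of constants that degenerate as $\varepsilon\to 0$ (harmless here, as you correctly note). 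The paper instead first extracts structure: adding the $\uo$- and $\ut$-equations and subtracting the result from the $w$-equation shows that $w-(\uo+\ut)$ solves the heat equation with zero data, hence $w=\uo+\ut$; then testing with $(w-w_p)^+$ and with $\uo^-$, $\ut^-$ --- and here the sign hypotheses in (H1) are essential --- gives $0\le\uo,\ \ut\le w\le w_p$. These pointwise bounds are uniform in $\varepsilon$ and yield \eqref{lse} immediately; more importantly, the facts $w=\uo+\ut$, $\uo,\ut\ge 0$, and $w\le w_p$ (which let $\theta_p$ drop out of the equations) are precisely assertion (1) of Theorem \ref{athm} and are used throughout Section 3, so the paper's proof of this lemma is doing double duty. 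Your proof suffices for the lemma as literally stated, but the comparison arguments you bypass would still have to be supplied elsewhere to identify the Leray--Schauder fixed point as a non-negative solution with $w=\uo+\ut\le w_p$.
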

\begin{proof} It is easy to see that the above equation is equivalent to the following problem
	\begin{eqnarray}
	\partial_tw&=&\gamma\mdiv\left[(\mu\theta_p(u_1)+\nu\theta_p(u_2))(\theta_p(u_1)+\theta_p(u_2))^{\gamma-1}\nabla w\right]+\varepsilon\Delta w\nonumber\\
	&&+\sigma\theta_p(u_1) F(\theta_p(w))+\sigma\theta_p(u_2) G(\theta_p(w))\ \mbox{in $\ot$,}\label{aew2}\\
	\nabla w\cdot\mathbf{n}&=&0\ \ \mbox{on $\Sigma_T$,}\\
	w(x,0)&=&\sigma w^{(0)}(x)\ \ \mbox{on $\Omega$, }\\
	\partial_t\uo&=&\gamma\mu\mdiv\left[\theta_p(u_1)(\theta_p(u_1)+\theta_p(u_2))^{\gamma-1}\nabla w\right]+\varepsilon\Delta \uo\nonumber\\
	&&+\sigma\theta_p(u_1)\fo(\theta_p(w))+\sigma\theta_p(u_2)\go(\theta_p(w))\ \mbox{in $\ot$,}\label{aeuo2}\\
	\nabla \uo\cdot\mathbf{n}&=&0\ \ \mbox{on $\Sigma_T$,}\\
	\uo(x,0)&=&\sigma\uo^{(0)}(x) \ \mbox{on $\Omega$, }\\
	\partial_t\ut&=&\gamma\nu\mdiv\left[\theta_p(u_2)(\theta_p(u_1)+\theta_p(u_2))^{\gamma-1}\nabla w\right]+\varepsilon\Delta \ut\nonumber\\
	&&+\sigma\theta_p(u_1)\ft(\theta_p(w))+\theta_p(u_2)\gt(\theta_p(w))\ \mbox{in $\ot$,}\label{aeut2}\\
	\nabla \ut\cdot\mathbf{n}&=&0\ \ \mbox{on $\Sigma_T$,}\label{autb2}\\
	\ut(x,0)&=&\sigma\ut^{(0)}(x) \ \ \mbox{on $\Omega$. }\label{auicon2 }
	\end{eqnarray}
	Add \eqref{aeut2} to \eqref{aeuo2} and subtract the resulting equation from \eqref{aew2} to derive
	\begin{equation*}
	\partial_t(w-(\uo+\ut))-\varepsilon\Delta(w-(\uo+\ut))=0\ \ \mbox{in $\ot$.}
	\end{equation*}
	Recall the initial boundary conditions for $(w-(\uo+\ut))$ to deduce
	\begin{equation}\label{wot}
	w=\uo+\ut.
	\end{equation}
	Use $(w-w_p)^+$ as a test function \eqref{aeuo2} to obtain
	\begin{eqnarray*}
		\lefteqn{\frac{1}{2}\frac{d}{dt}\io\left[(w-w_p)^+\right]^2dx}\nonumber\\
		&&+\gamma\io\left[(\mu\theta_p(u_1)+\nu\theta_p(u_2))(\theta_p(u_1)+\theta_p(u_2))^{\gamma-1}+\varepsilon\right]|\nabla(w-w_p)^+|^2dx\nonumber\\
		&&=\io\left(\sigma\theta_p(u_1) F(\theta_p(w))+\sigma\theta_p(u_2) G(\theta_p(w))\right)(w-w_p)^+dx= 0.
	\end{eqnarray*}
	The last step is due to the definition of $\theta_p$ \eqref{thdf} and (H1).	Integrate with respect to $t$ to yield
	\begin{equation}\label{wub}
	w\leq w_p.
	\end{equation}
	Thus we can replace $\theta_p(w)$ in the preceding equations by $w$.
	Note that 
	\begin{equation*}
	\theta_p(u_1)\uo^-=0\ \ \mbox{and}\ \ \theta_p(u_2) \go(w)\geq 0\ \ \mbox{because $w\leq w_p$.}
	\end{equation*}
	With this in mind, we use $\uo^-$ as a test function in \eqref{aeuo2} to derive
	\begin{eqnarray*}
		-\frac{1}{2}\frac{d}{dt}\io\left(\uo^-\right)^2dx-\varepsilon\io|\nabla\uo^-|^2dx=\sigma\io\theta_p(u_2) \go(w)\uo^-dx\geq 0,
	\end{eqnarray*}
	from whence follows
	%	This  implies
	\begin{equation*}
	\uo\geq 0.
	\end{equation*}
	By the same token, 
	\begin{equation*}
	\ut\geq 0.
	\end{equation*}
	In view of \eqref{thdf}, \eqref{wot}, and \eqref{wub}, we have
	\begin{equation*}
	\theta_p(u_1)=\uo,\ \ \theta_p(u_2)=\ut.
	\end{equation*}
	Use $w$ as a test function \eqref{aew2} to obtain
	\begin{eqnarray}
	\frac{1}{2}\frac{d}{dt}\io w^2dx+\gamma\io(\mu\uo+\nu\ut)w^{\gamma-1}|\nabla w|^2dx+\varepsilon\io|\nabla w|^2dx\leq M_0\io w^2dx,\label{awe2}
	\end{eqnarray}
	where
	\begin{equation}\label{m0}
	M_0=\max\{\max_{w\in [0,w_p]}F(w),\max_{w\in [0,w_p]}G(w) \}.
	\end{equation}
	%Note that 
	%\begin{equation*}
	%(\mu\uo+\nu\ut)w^{\gamma-1}\geq \min\{\mu,\nu\}w^{\gamma}\geq \min\{\mu,\nu\}c_0^{\gamma-1}.
	%\end{equation*}
	Use Gronwall's inequality in \eqref{awe2} to obtain 
	\begin{eqnarray}
	\sup_{0\leq t\leq T}\io w^2dx+\ioT|\nabla w|^2dxdt\leq c.\label{awe1}
	\end{eqnarray}
	%Obviously, the up bound $c$ here is independent of $\varepsilon$.
	Similarly, we can prove that $\uo, \ut$ are bounded in $W(0,T)\subset L^2(\ot)$. This completes the proof.
\end{proof}

Theorem \ref{athm} is a consequence of the preceding two lemmas and the Leray-Schauder fixed point theorem.
\section{Proof of Theorem \ref{mth}}

The proof of Theorem \ref{mth} is divided into several lemmas.

For each $1\geq\varepsilon>0$ we denote by $(\we,\uoe, \ute)$ the solution to \eqref{aew}-\eqref{auicon} constructed earlier. Subsequently, we have
\begin{equation*}
\uoe\geq 0,\ \ \ute\geq 0, \ \ \we=\uoe+\ute.
\end{equation*}
Moreover,
\begin{equation*}
%c_0\leq 
\we\leq w_p.
\end{equation*}
Set
\begin{eqnarray*}
	\re&=&\uoe F(\we)+\ute G(\we),\\
	\roe&=&\uoe\fo(\we)+\ute\go(\we),\\
	\rte&=&\uoe\ft(\we)+\ute\gt(\we).
\end{eqnarray*}
%Since $\we=\uoe+\ute$, we actually have
It follows that
\begin{equation*}
0\leq \re,\ \roe,\ \rte\leq w_pM_0,
\end{equation*}
where $M_0$ is given as in \eqref{m0}.
We can write \eqref{aew}-\eqref{auicon} in the form
\begin{eqnarray}
\partial_t\we-\mdiv\left[(\mu\uoe+\nu\ute)\nabla \weg^\gamma\right]-\varepsilon\Delta \we&=&\re\ \mbox{in $\ot$,}\label{aew4}\\
\partial_t\uoe-\mu\mdiv\left[\uoe\nabla  \weg^\gamma\right]-\varepsilon\Delta \uoe&=&\roe\ \mbox{in $\ot$,}\label{aeuo4}\\
\partial_t\ute-\nu\mdiv\left[\ute\nabla \weg^\gamma\right]-\varepsilon\Delta \ute&=&\rte\ \mbox{in $\ot$,}\label{aeut4}\\
\nabla \we\cdot\mathbf{n}=\nabla \uoe\cdot\mathbf{n}&=&\nabla \ute\cdot\mathbf{n}=0\ \ \mbox{on $\Sigma_T$,}\label{autb4}\\
(\we(x,0),\uoe(x,0),\ute(x,0))&=&(w^{(0)}(x),\uo^{(0)}(x),\ut^{(0)}(x) )\nonumber\\
&& \mbox{on $\Omega$. }\label{auicon4}
\end{eqnarray}
Our key estimate is the following
\begin{lemma}\label{l41}
	We have
	\begin{eqnarray*}
		\ioT\left|\nabla\left(\we\right)^{\frac{\gamma+1}{2}}\right|^2dxdt+\varepsilon\ioT\left(\left|\nabla\sqrt{\uoe}\right|^2+\left|\nabla\sqrt{\ute}\right|^2\right)dxdt\leq c.
	\end{eqnarray*}
\end{lemma}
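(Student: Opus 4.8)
The plan is to test the two species equations \eqref{aeuo4} and \eqref{aeut4} against regularized logarithms of the respective densities, add the results, and exploit the hypothesis $\mu=\nu$ to recombine the cross-diffusion contributions into a single nonnegative term. Fix $\delta\in(0,1)$ and use $\ln(\uoe+\delta)$ as a test function in \eqref{aeuo4} and $\ln(\ute+\delta)$ in \eqref{aeut4}. Since $0\le\uoe,\ute\le w_p$ and $\nabla\uoe,\nabla\ute\in L^2(\ot)$, each of these functions lies in $L^2(0,T;W^{1,2}(\Omega))$ for fixed $\delta$, so the pairings are legitimate. The parabolic terms produce $\frac{d}{dt}\io\beta_\delta(\uoe)\,dx$ and $\frac{d}{dt}\io\beta_\delta(\ute)\,dx$, where $\beta_\delta(s)=\int_0^s\ln(\tau+\delta)\,d\tau$; this follows from the chain-rule identity of Lemma \ref{comt} applied to the smooth convex function $\beta_\delta$, whose derivative $\ln(\cdot+\delta)$ is Lipschitz on $[0,w_p]$.

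The essential cancellation is in the cross-diffusion terms. Testing \eqref{aeuo4} gives $\mu\io\uoe\nabla\weg^\gamma\cdot\nabla\ln(\uoe+\delta)\,dx=\mu\io\frac{\uoe}{\uoe+\delta}\nabla\weg^\gamma\cdot\nabla\uoe\,dx$, and likewise for \eqref{aeut4} with $\nu$. Adding these and letting $\delta\to0$, the factors $\uoe/(\uoe+\delta)$ tend to $1$ where $\uoe>0$ while $\nabla\uoe=0$ a.e. on $\{\uoe=0\}$, so the sum converges to $\mu\io\nabla\weg^\gamma\cdot\nabla\uoe\,dx+\nu\io\nabla\weg^\gamma\cdot\nabla\ute\,dx$. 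Here $\mu=\nu$ is indispensable: using $\we=\uoe+\ute$ the two integrals combine into $\mu\io\nabla\weg^\gamma\cdot\nabla\we\,dx=\mu\gamma\io(\we)^{\gamma-1}|\nabla\we|^2\,dx$, a nonnegative multiple of $\io|\nabla(\we)^{\frac{\gamma+1}{2}}|^2\,dx$. At the same time the viscosity terms contribute $\varepsilon\io|\nabla\uoe|^2/(\uoe+\delta)\,dx$ and its $\ute$-analogue, which increase as $\delta\downarrow0$ to $4\varepsilon\io|\nabla\sqrt{\uoe}|^2\,dx$ and $4\varepsilon\io|\nabla\sqrt{\ute}|^2\,dx$.

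It remains to bound the forcing and the time-boundary terms. Since $0\le\roe,\rte\le w_pM_0$ and $\uoe,\ute\le w_p$, one has $\roe\ln(\uoe+\delta)\le\roe[\ln(w_p+1)]^+$ for $\delta\le1$, so $\ioT\big(\roe\ln(\uoe+\delta)+\rte\ln(\ute+\delta)\big)\,dxdt$ is bounded above uniformly in $\delta$. Integrating the combined identity over $[0,T]$ leaves $\io[\beta_\delta(\uoe)+\beta_\delta(\ute)]\,dx$ evaluated at $t=T$ and $t=0$; because $s\mapsto s\ln s-s$ is bounded on $[0,w_p]$ and $0\le\uoe,\ute\le w_p$, these contributions are bounded uniformly in $\delta$. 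Collecting the bounds and passing $\delta\to0$ then yields the asserted estimate, with the constant $c$ depending only on $w_p$, $M_0$, $\mu$, $\gamma$, $T$ and $|\Omega|$.

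The main obstacle is the singularity of the logarithm on the vacuum set $\{\uoe=0\}$: every step must be carried out at the regularized level $\delta>0$, and the limit $\delta\to0$ must be organized so that the quantities we wish to keep survive on the correct side of the inequality. Concretely, the two $\varepsilon$-viscosity integrals are recovered by monotone convergence as $4\varepsilon\io|\nabla\sqrt{\uoe}|^2\,dx$ and $4\varepsilon\io|\nabla\sqrt{\ute}|^2\,dx$, whereas the recombined cross-diffusion term---which is sign-indefinite before the two equations are added---converges by dominated convergence (its integrand being dominated by $|\nabla\weg^\gamma|\,|\nabla\uoe|\in L^1(\ot)$) to $\mu\gamma\io(\we)^{\gamma-1}|\nabla\we|^2\,dx$. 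It is worth stressing that testing a single species equation alone yields the sign-indefinite integral $\mu\gamma\io(\we)^{\gamma-1}\nabla\we\cdot\nabla\uoe\,dx$, so the estimate genuinely depends both on adding the two equations and on $\mu=\nu$ to produce the nonnegative $\nabla\we\cdot\nabla\we$.
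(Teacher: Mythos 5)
Your proof is correct and follows essentially the same route as the paper's: test the two species equations with regularized logarithms of the densities, add the resulting identities, and recombine the cross-diffusion contributions into $\nabla\weg^\gamma\cdot\nabla\we=\gamma\weg^{\gamma-1}|\nabla\we|^2$. Your bookkeeping in the $\delta\to0$ limit (dominated convergence for the recombined cross term, monotone convergence for the $\varepsilon$-viscosity terms) is, if anything, spelled out more carefully than in the paper. The one substantive difference is your assertion that $\mu=\nu$ is ``indispensable'' here: it is not. The paper tests \eqref{aeuo4} with $\frac{1}{\mu}\ln(\uoe+\tau)$ and \eqref{aeut4} with $\frac{1}{\nu}\ln(\ute+\tau)$, so the mobilities cancel out of the cross-diffusion terms \emph{before} the two identities are added; the sum is then $\io\frac{\uoe}{\uoe+\tau}\nabla\weg^\gamma\cdot\nabla\uoe\,dx+\io\frac{\ute}{\ute+\tau}\nabla\weg^\gamma\cdot\nabla\ute\,dx$, with no $\mu,\nu$ in front, and it converges to $\io\nabla\weg^\gamma\cdot\nabla\we\,dx$ for arbitrary $\mu,\nu>0$. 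Hence Lemma \ref{l41} (and consequently Lemma \ref{l32}) is valid without (H5), and only Lemma \ref{l33} genuinely requires $\mu=\nu$ --- a distinction the authors care about, since the case $\mu\ne\nu$ is left open and every estimate that survives it is worth keeping. Your unweighted test functions prove the estimate only under (H5), which suffices for Theorem \ref{mth} but discards this extra generality. What \emph{is} essential, and you identify it correctly, is adding the two equations: a single species alone yields only the sign-indefinite integral $\io\nabla\weg^\gamma\cdot\nabla\uoe\,dx$.
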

\begin{proof}
	Pick $\tau>0$. Use $\frac{1}{\mu}\ln(\uoe+\tau)$ as a test function in \eqref{aeuo4} to derive
	\begin{eqnarray*}
		\lefteqn{\frac{1}{\mu}\frac{d}{dt}\io\left((\uoe+\tau)\ln(\uoe+\tau)-\uoe\right)dx+\io \frac{\uoe}{\uoe+\tau}\nabla  \weg^\gamma\nabla\uoe dx}\nonumber\\
		&&+\frac{\varepsilon}{\mu}\io\frac{1}{\uoe+\tau}|\nabla\uoe|^2\nonumber\\
		&=& \frac{1}{\mu}\io \left(\uoe\fo(\we)+\ute\go(\we)\right)\ln(\uoe+\tau)dx\nonumber\\
		&\leq&\frac{1}{\mu}\int_{\{\uoe+\tau\geq 1\}}\left(\uoe\fo(\we)+\ute\go(\we)\right)\ln(\uoe+\tau)dx\nonumber\\
		&\leq &\frac{M_0}{\mu}\io\we(\uoe+\tau)dx.
	\end{eqnarray*}
	%, note that $\sup_{\ot}|\uoe\ln\uoe|\leq c$,
	Integrate, note that $\sup_{\ot}|\uoe\ln\uoe|\leq c$, and take $\tau\rightarrow 0$ to get
	\begin{equation*}
	%	\frac{1}{\mu}\io\left((\uoe+\tau)\ln\uoe-\uoe\right)dx
	\ioT\nabla\weg^{\gamma}\cdot\nabla\uoe dxdt+\frac{4\varepsilon}{\mu}\ioT\left|\nabla\sqrt{\uoe}\right|^2dxdt\leq c.
	\end{equation*}
	Similarly,
	\begin{equation*}
	\ioT\nabla\weg^{\gamma}\cdot\nabla\ute dxdt+\frac{4\varepsilon}{\mu}\ioT\left|\nabla\sqrt{\ute}\right|^2dxdt\leq c.
	\end{equation*}
	Add up the two preceding inequalities to obtain the desired result.
\end{proof}
\begin{lemma}\label{l32} The sequence $\{\we\}$ is precompact in $L^p(\ot)$ for each $p\geq 1$.
\end{lemma}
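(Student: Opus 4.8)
The plan is to deduce compactness of $\{\we\}$ from the Kolmogorov--Fréchet--Riesz criterion, transferring translation estimates from the nonlinear quantity $\weg^{\frac{\gamma+1}{2}}$ --- which, by Lemma \ref{l41}, is the only object we control in $L^2(0,T;W^{1,2}(\Omega))$ uniformly in $\varepsilon$ --- to $\we$ itself. Throughout write $m=\frac{\gamma+1}{2}>1$, so that $\weg^{m}$ is bounded in $L^2(0,T;W^{1,2}(\Omega))$ independently of $\varepsilon$ (its gradient by Lemma \ref{l41}, its $L^2$ norm because $0\le\we\le w_p$).

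First I would produce a uniform bound on the time derivative. Under (H5) we have $\mu\uoe+\nu\ute=\mu\we$, so the flux in \eqref{aew4} equals $\mu\we\nabla\weg^{\gamma}=\frac{\mu\gamma}{\gamma+1}\nabla\weg^{\gamma+1}=\frac{2\mu\gamma}{\gamma+1}\weg^{m}\nabla\weg^{m}$, which is bounded in $L^2(\ot)$ since $\weg^{m}$ is bounded and $\nabla\weg^{m}\in L^2(\ot)$. Testing \eqref{aew4} against $\varphi\in W^{1,2}(\Omega)$ and using $0\le\re\le w_pM_0$ together with $\sqrt{\varepsilon}\,\nabla\we$ bounded in $L^2(\ot)$ (from the basic energy estimate \eqref{awe2}, whence $\varepsilon\nabla\we\to0$ in $L^2(\ot)$), I obtain $\|\partial_t\we\|_{\left(W^{1,2}(\Omega)\right)^*}\le c$ for a.e. $t$, and hence $\{\partial_t\we\}$ bounded in $L^2\bigl(0,T;\left(W^{1,2}(\Omega)\right)^*\bigr)$, uniformly in $\varepsilon$.

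Next come the translation estimates. For the spatial ones I would use the elementary inequality $|a-b|^{m}\le|a^{m}-b^{m}|$ for $a,b\ge0$; applied pointwise with $a=\we(x+h,t)$, $b=\we(x,t)$ and integrated, it gives
\[
\ioT|\we(x+h,t)-\we(x,t)|^{m}\,dx\,dt\le\ioT\left|\weg^{m}(x+h,t)-\weg^{m}(x,t)\right|dx\,dt\le c|h|,
\]
after extending $\weg^{m}$ to $W^{1,2}(\mathbb{R}^d)$ by a fixed bounded extension operator (available since $\po$ is Lipschitz) and applying the standard difference-quotient bound for $W^{1,2}$ functions. For the temporal ones the key is $|a-b|^{m+1}\le(a-b)(a^{m}-b^{m})$ for $a,b\ge0$; with $a=\we(x,t+k)$, $b=\we(x,t)$ this yields
\[
\int_0^{T-k}\!\!\io|\we(t+k)-\we(t)|^{m+1}dx\,dt\le\int_0^{T-k}\!\!\left\langle\we(t+k)-\we(t),\,\weg^{m}(t+k)-\weg^{m}(t)\right\rangle dt,
\]
and writing $\we(t+k)-\we(t)=\int_t^{t+k}\partial_s\we\,ds$ in $\left(W^{1,2}(\Omega)\right)^*$ and estimating the pairing by Cauchy--Schwarz bounds the right-hand side by $c\,k$, using the uniform bounds on $\partial_t\we$ and on $\weg^{m}$ from the previous step.

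Finally, since $\{\we\}$ is uniformly bounded in $L^\infty(\ot)$, smallness of the spatial and temporal translations in $L^{m}$ and $L^{m+1}$ respectively upgrades to smallness in every $L^p(\ot)$, $p\ge1$; combined with the uniform $L^p$ bound, the Kolmogorov--Fréchet--Riesz theorem yields precompactness of $\{\we\}$ in $L^p(\ot)$ for each $p\ge1$. I expect the temporal translation estimate to be the main obstacle: because $\mdet(A)=0$ there is no uniform $L^2(0,T;W^{1,2}(\Omega))$ bound on $\we$ itself, so the classical Lions--Aubin lemma (Lemma \ref{la}) cannot be applied to $\we$ directly, and compactness must instead be extracted from the degenerate pair $\bigl(\weg^{m},\partial_t\we\bigr)$ through the two elementary convexity inequalities above.
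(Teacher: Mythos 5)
Your proof is correct, but it takes a genuinely different route from the paper's. The paper computes the equation satisfied by $\weg^{\frac{\gamma+1}{2}}$ directly (its \eqref{wcom}), observes that $\partial_t\weg^{\frac{\gamma+1}{2}}$ is bounded in $L^2(0,T;(W^{1,2}(\Omega))^*)+L^1(\ot)$, and applies Lemma \ref{la}(i) to the power $\weg^{\frac{\gamma+1}{2}}$; precompactness of $\{\we\}$ in every $L^p(\ot)$ then follows from its uniform boundedness. You instead keep the time derivative on $\we$ itself --- which sits cleanly in $L^2(0,T;(W^{1,2}(\Omega))^*)$ because the flux equals $\frac{2\mu\gamma}{\gamma+1}\weg^{\frac{\gamma+1}{2}}\nabla\weg^{\frac{\gamma+1}{2}}$ --- and transfer the spatial regularity of the power to translation estimates on $\we$ via the convexity inequalities $|a-b|^m\le|a^m-b^m|$ and $|a-b|^{m+1}\le(a-b)(a^m-b^m)$, concluding with Kolmogorov--Fr\'echet--Riesz; this is essentially the Alt--Luckhaus compactness argument for degenerate parabolic equations. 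What your route buys: it avoids the sum space $L^2(0,T;(W^{1,2}(\Omega))^*)+L^1(\ot)$ for the time derivative, for which Lemma \ref{la} as stated does not literally apply and a mild generalization is implicitly needed in the paper's argument; the paper's route buys brevity and spares the hand-made translation estimates. Two small inaccuracies in yours, neither fatal: the bound $\|\partial_t\we(t)\|_{(W^{1,2}(\Omega))^*}\le c$ does not hold for a.e.\ $t$ (the flux is only square-integrable in time), only the $L^2$-in-time bound does, which is all you actually use; and estimating the duality pairing by Cauchy--Schwarz in both variables gives $c\sqrt{k}$ rather than $ck$ for the temporal translation, which still tends to $0$ and suffices. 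Finally, your appeal to (H5) is cosmetic: without $\mu=\nu$ one still has $|(\mu\uoe+\nu\ute)\nabla\weg^{\gamma}|\le\max\{\mu,\nu\}\,\we\,|\nabla\weg^{\gamma}|=c\,|\nabla\weg^{\gamma+1}|$, and indeed the paper proves this lemma without invoking $\mu=\nu$.
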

\begin{proof}
	%	First, we assume
	%	\begin{equation*}
	%	\frac{\gamma+1}{2}>2.
	%	\end{equation*} 
	Without loss of generality, we may assume
	\begin{equation}\label{wep}
	\we\geq\varepsilon. 
	\end{equation}This can be achieved easily by replacing $\uo^{(0)}$ with $\uo^{(0)}+\varepsilon$ in our approximate problems. Indeed,
	%Keep this and (H1) in mind and 
	use $(\varepsilon-\we)^+$ as a test function in \eqref{aew4} to get
	\begin{eqnarray*}
		\lefteqn{-\frac{1}{2}\frac{d}{dt}\io\left[(\varepsilon-\we)^+\right]^2dx-\gamma\io(\mu\uoe+\nu\ute)\weg^{\gamma-1}|\nabla(\varepsilon-\we)^+|^2dx}\nonumber\\
		&&-\varepsilon\io|\nabla(\varepsilon-\we)^+|^2dx=\io\re(\varepsilon-\we)^+dx\geq 0.
	\end{eqnarray*}
	This gives \eqref{wep}.
	
	We derive from \eqref{aew4} that
	\begin{eqnarray}
	\partial_t\left(\we\right)^{\frac{\gamma+1}{2}}&=&\frac{\gamma+1}{2}\left(\we\right)^{\frac{\gamma+1}{2}-1}\partial_t\we\nonumber\\
	&=&\frac{\gamma+1}{2}\mdiv\left[(\mu\uoe+\nu\ute)\left(\we\right)^{\frac{\gamma+1}{2}-1}\nabla \weg^\gamma\right]\nonumber\\
	&&-\frac{\gamma+1}{2}(\mu\uoe+\nu\ute)\nabla\left(\we\right)^{\frac{\gamma+1}{2}-1}\cdot\nabla \weg^\gamma\nonumber\\
	&&+\frac{(\gamma+1)\varepsilon}{2}\mdiv\left[\left(\we\right)^{\frac{\gamma+1}{2}-1}\nabla\we\right]-\frac{(\gamma+1)\varepsilon}{2}\nabla\left(\we\right)^{\frac{\gamma+1}{2}-1}\cdot\nabla \we\nonumber\\ &&+\frac{\gamma+1}{2}\left(\we\right)^{\frac{\gamma+1}{2}-1}\re\nonumber\\
	&=&\gamma\mdiv\left[(\mu\uoe+\nu\ute)\weg^{\gamma-1}\nabla\left(\we\right)^{\frac{\gamma+1}{2}} \right]\nonumber\\
	&&-\frac{\gamma(\gamma-1)}{\gamma+1}(\mu\uoe+\nu\ute)\left(\we\right)^{\frac{\gamma+1}{2}-2}\left|\nabla\left(\we\right)^{\frac{\gamma+1}{2}}\right|^2\nonumber\\
	&&+\varepsilon\Delta\left(\we\right)^{\frac{\gamma+1}{2}}-(\gamma^2-1)\varepsilon\left(\we\right)^{\frac{\gamma+1}{2}-1}\left|\nabla \sqrt{\we}\right|^2\nonumber\\ &&+\frac{\gamma+1}{2}\left(\we\right)^{\frac{\gamma+1}{2}-1}\re.\label{wcom}
	\end{eqnarray}
	Remember that $\frac{\gamma+1}{2}-1>0$. It follows that
	\begin{equation*}
	(\mu\uoe+\nu\ute)\left(\we\right)^{\frac{\gamma+1}{2}-2}\leq \max\{\mu,\nu\}\left(\we\right)^{\frac{\gamma+1}{2}-1}\leq c.
	\end{equation*}
	We can conclude that the sequence $\{\partial_t\left(\we\right)^{\frac{\gamma+1}{2}}\}$ is bounded in $L^2(0, T; \left(W^{1,2}(\Omega)\right)^*)+L^1(\ot)	$. This together with Lemma \ref{l41} enables us to use (i) in Lemma \ref{la}, thereby obtaining the precompactness
	of $\{\left(\we\right)^{\frac{\gamma+1}{2}}\}$ in $L^2(\ot)$. Then the lemma follows from the boundedness of $\{\we\}$.

\end{proof}

%This together with \eqref{aew4} implies that $\{\we\}$ is bounded in $W(0,T)$.weak$^*$ in $L^\infty(\ot)$, strongly in $L^2(\ot)$
We may extract a subsequence of $\{(\uoe,\ute,\we)\}$, still denoted by $\{(\uoe,\ute,\we)\}$, such that
\begin{eqnarray}
\uoe&\rightarrow&\uo \ \ \mbox{weak$^*$ in $L^\infty(\ot)$,}\\
\ute&\rightarrow&\ut \ \ \mbox{weak$^*$ in $L^\infty(\ot)$,}\\
\we&\rightarrow&w \ \ \mbox{a.e. in $\ot$ and strongly in $L^p(\ot)$ for each $p\geq 1$, and}\label{wc}\\
\left(\we\right)^{\frac{\gamma+1}{2}}&\rightarrow& w^{\frac{\gamma+1}{2}}\ \ \mbox{weakly in $L^2(0,T; W^{1,2}(\Omega))$.}
\end{eqnarray}
Since $\{\we\}$ is bounded, we also have
\begin{equation*}
\left(\we\right)^{p}\rightarrow w^{p}\ \ \mbox{weakly in $L^2(0,T; W^{1,2}(\Omega))$ for each $p\geq \frac{\gamma+1}{2}$.}
\end{equation*}
This combined with \eqref{aew4} implies
\begin{equation*}
\partial_t\we\rightarrow \partial_tw\ \ \mbox{weakly in $L^2(0,T; \left(W^{1,2}(\Omega)\right)^*)$.}
\end{equation*}
%The last limit implies
In view of \eqref{ra1}, we yield
\begin{eqnarray}
F_i(\we)&\rightarrow &F_i(w)\ \ \mbox{strongly in $L^p(\ot)$ for each $p\geq 1$}\ \ \mbox{and }\label{r1c}\\ 
G_i(\we)&\rightarrow& G_i(w)\ \ \mbox{strongly in $L^p(\ot)$ for each $p\geq 1$, $i=1,2$.}\label{r2c}
\end{eqnarray}
Subsequently,
\begin{eqnarray}
\re\rightarrow R,\ \ \roe\rightarrow R_1,\ \ \rte\rightarrow R_2\ \ \mbox{weak$^*$ in $L^\infty(\ot)$.}\label{rhl}
\end{eqnarray}

%	\end{proof}
\begin{lemma}\label{l33}
	If $\mu=\nu$, then 
	\begin{equation*}
	\left(\we\right)^{\gamma+1}\rightarrow w^{\gamma+1}\ \ \mbox{strongly in $L^2(0,T;W^{1,2}(\Omega))$.}
	\end{equation*}
\end{lemma}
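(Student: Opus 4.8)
The plan is to exploit the fact that, when $\mu=\nu$, equation \eqref{aew4} for $\we$ decouples from $\uoe,\ute$ and becomes a genuine porous-medium equation. Indeed $\mu\uoe+\nu\ute=\mu\we$ and $\we\nabla\weg^{\gamma}=\frac{\gamma}{\gamma+1}\nabla\weg^{\gamma+1}$, so \eqref{aew4} reads
\[
\pt\we=\frac{\mu\gamma}{\gamma+1}\Delta\weg^{\gamma+1}+\varepsilon\Delta\we+\re .
\]
Since $L^2(0,T;W^{1,2}(\Omega))$ is a Hilbert space, it suffices to establish that $\weg^{\gamma+1}\rightharpoonup w^{\gamma+1}$ weakly there \emph{together with} convergence of the norms, because weak convergence plus convergence of norms forces strong convergence. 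The weak convergence is cheap: writing $\nabla\weg^{\gamma+1}=2\,\weg^{\frac{\gamma+1}{2}}\nabla\weg^{\frac{\gamma+1}{2}}$ and using $\we\le w_p$ together with Lemma \ref{l41} shows $\{\weg^{\gamma+1}\}$ is bounded in $L^2(0,T;W^{1,2}(\Omega))$, and the a.e.\ convergence \eqref{wc} identifies the weak limit as $w^{\gamma+1}$. By weak lower semicontinuity of the norm one already has $\liminf_\varepsilon\ioT|\nabla\weg^{\gamma+1}|^2dxdt\ge\ioT|\nabla w^{\gamma+1}|^2dxdt$, so everything reduces to the reverse $\limsup$ inequality.

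To obtain it I would test the porous-medium equation above with $\weg^{\gamma+1}$ (legitimate after the usual smoothing, the time term being handled by Lemma \ref{comt} with $f=\we$ and $\alpha=\gamma+1$), which yields the energy identity
\begin{align*}
\frac{\mu\gamma}{\gamma+1}\ioT\left|\nabla\weg^{\gamma+1}\right|^2dxdt&=-\frac{1}{\gamma+2}\io\weg^{\gamma+2}(x,T)dx+\frac{1}{\gamma+2}\io\left(w^{(0)}+\varepsilon\right)^{\gamma+2}dx\\
&\quad-\varepsilon\ioT\nabla\we\cdot\nabla\weg^{\gamma+1}dxdt+\ioT\re\,\weg^{\gamma+1}dxdt .
\end{align*}
Now pass to the limit term by term. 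The $\varepsilon$-term vanishes: the energy bound \eqref{awe1} gives $\|\sqrt{\varepsilon}\,\nabla\we\|_{L^2(\ot)}\le c$, hence $\|\varepsilon\nabla\we\|_{L^2}\le\sqrt{\varepsilon}\,c\to0$ while $\nabla\weg^{\gamma+1}$ stays bounded. The source term converges to $\ioT Rw^{\gamma+1}dxdt$ by \eqref{rhl} paired with the strong $L^2$ convergence of $\weg^{\gamma+1}$, and the initial term converges to $\frac{1}{\gamma+2}\io(w^{(0)})^{\gamma+2}dx$.

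The delicate point — which I expect to be the main obstacle — is the term evaluated at $t=T$, because it enters with a minus sign, so I need a \emph{lower} bound on $\io\weg^{\gamma+2}(x,T)dx$. I would first show $\we(\cdot,T)\rightharpoonup w(\cdot,T)$ weakly in $L^2(\Omega)$ by pairing $\pt\we\rightharpoonup\pt w$ against time-independent test functions (using $\we(\cdot,0)=w^{(0)}+\varepsilon\to w^{(0)}$), and then invoke convexity of $s\mapsto s^{\gamma+2}$ to conclude $\liminf_\varepsilon\io\weg^{\gamma+2}(x,T)dx\ge\io w^{\gamma+2}(x,T)dx$. Finally I would run the identical computation on the limit equation $\pt w=\frac{\mu\gamma}{\gamma+1}\Delta w^{\gamma+1}+R$ (obtained by letting $\varepsilon\to0$ in \eqref{aew4}, the $\varepsilon\Delta\we$ term dropping out as above), testing with $w^{\gamma+1}$ via Lemma \ref{comt}, to get
\begin{align*}
\frac{\mu\gamma}{\gamma+1}\ioT\left|\nabla w^{\gamma+1}\right|^2dxdt&=-\frac{1}{\gamma+2}\io w^{\gamma+2}(x,T)dx+\frac{1}{\gamma+2}\io\left(w^{(0)}\right)^{\gamma+2}dx\\
&\quad+\ioT Rw^{\gamma+1}dxdt .
\end{align*}
Comparing the $\limsup$ of the approximate identity with this limit identity gives $\limsup_\varepsilon\ioT|\nabla\weg^{\gamma+1}|^2dxdt\le\ioT|\nabla w^{\gamma+1}|^2dxdt$. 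Together with the lower-semicontinuity bound this yields convergence of the norms, and combined with the weak convergence it delivers the asserted strong convergence of $\weg^{\gamma+1}$ in $L^2(0,T;W^{1,2}(\Omega))$.
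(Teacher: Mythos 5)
Your proof is correct, and while it starts from the same observation as the paper---that $\mu=\nu$ turns \eqref{aew4} into a porous-medium equation for $\we$---it resolves the crux of the lemma by a different mechanism. The paper tests the difference of the approximate and limit equations with $\weg^{\gamma+1}-w^{\gamma+1}$ and shows directly that $\ioT\left|\nabla\left(\weg^{\gamma+1}-w^{\gamma+1}\right)\right|^2dxdt\to0$; to control the resulting pairing $\int_0^T\left\langle\pt(\we-w),\weg^{\gamma+1}-w^{\gamma+1}\right\rangle dt$ it needs the genuine two-sided convergence $\io\weg^{\gamma+2}(x,T)dx\to\io w^{\gamma+2}(x,T)dx$ at the endpoint, which it obtains from a second energy estimate (testing \eqref{aew44} with $\pt\ome$) and Lions--Aubin compactness of $\{\ome\}$ in $C([0,T];L^2(\Omega))$, i.e.\ \eqref{pc}. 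You instead use the Hilbert-space fact that weak convergence plus convergence of norms implies strong convergence, and you observe that the only dangerous boundary term, $-\frac{1}{\gamma+2}\io\weg^{\gamma+2}(x,T)dx$, enters the $\limsup$ computation with a favorable sign, so a one-sided bound suffices; that bound you get cheaply from $\we(\cdot,T)\rightharpoonup w(\cdot,T)$ in $L^2(\Omega)$ together with weak lower semicontinuity of the convex functional $v\mapsto\io v^{\gamma+2}dx$. This lets you dispense entirely with the $\pt\ome$ estimate and the $C([0,T];L^2(\Omega))$ compactness, at the price of writing both the approximate and the limit energy identities (each justified by Lemma \ref{comt} with $\alpha=\gamma+1$) and comparing them. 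Both routes are sound and rest on the same structural facts (Lemma \ref{l41}, the boundedness of $\we$, and the weak$^*$ convergence of $\re$); yours is somewhat leaner for this particular lemma, while the paper's yields the pointwise-in-time convergence \eqref{pc} and the bound on $\pt\weg^{\frac{\gamma+2}{2}}$ as by-products.
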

\begin{proof} In this case, we have
	\begin{equation}\label{ws1}
	(\mu\uoe+\nu\ute)\nabla \weg^\gamma=\mu\we \nabla(\we)^\gamma=\frac{\mu\gamma}{\gamma+1}\nabla\left(\we\right)^{\gamma+1}.
	%\rightarrow \mu\gamma w^\gamma\nabla w\ \ \mbox{weakly in $\left(L^2(\ot)\right)^d$.}
	\end{equation}
	Thus we can write \eqref{aew4} in the form
	\begin{equation}\label{aew44}
	\partial_t\we-\frac{\mu\gamma}{\gamma+1}\Delta\omega^{\varepsilon}
	=\re,
	\end{equation}
	where
	\begin{equation*}
	\omega^{\varepsilon}=\left(\we\right)^{\gamma+1}+\frac{\varepsilon(\gamma+1)}{\mu\gamma} \we.
	\end{equation*}
	We may assume that $\we$ is a classical solution to \eqref{aew44} because it can be viewed as the limit of a sequence of classical approximate solutions. Use $\partial_t	\ome$ as a test function in \eqref{aew44} to derive
	\begin{equation}\label{end1}
	\io 	\partial_t\we\partial_t	\ome dx+\frac{\mu\gamma}{\gamma+1}\io\nabla\omega^{\varepsilon}\cdot\nabla\partial_t	\ome dx=\io\re\partial_t	\ome dx
	\end{equation}
	We proceed to evaluate each integral in the above equation as follows:
	\begin{eqnarray*}
		\io 	\partial_t\we\partial_t	\ome dx&=&(\gamma+1)\io \left(\we\right)^{\gamma}\left(\partial_t\we\right)^2dx+\frac{\varepsilon(\gamma+1)}{\mu\gamma}\io\left(\partial_t\we\right)^2dx,\\
		\io\nabla\omega^{\varepsilon}\cdot\nabla\partial_t	\ome dx&=&\frac{1}{2}\frac{d}{dt}\io\left|\nabla\ome\right|^2dx,\\
		\io\re\partial_t	\ome dx&=&(\gamma+1)\io\re	\left(\we\right)^{\gamma}\partial_t\we dx+\frac{\varepsilon(\gamma+1)}{\mu\gamma}\io\re\partial_t\we dx\nonumber\\
		&\leq &\frac{\gamma+1}{2}\io \left(\we\right)^{\gamma}\left(\partial_t\we\right)^2dx+\frac{\gamma+1}{2}\io \left(\we\right)^{\gamma}\left(\re\right)^2dx\nonumber\\
		&&+\frac{\varepsilon(\gamma+1)}{2\mu\gamma}\io\left(\partial_t\we\right)^2dx++\frac{\varepsilon(\gamma+1)}{2\mu\gamma}\io\left(\re\right)^2dx
	\end{eqnarray*}
	Plug the preceding three results into \eqref{end1} and integrate to derive
	\begin{equation*}
	\ioT\left(\partial_t\weg^{\frac{\gamma+2}{2}}\right)^{2}dxdt+\varepsilon\ioT\left(\partial_t\we\right)^2dxdt+\sup_{0\leq t\leq T}\io\left|\nabla\ome\right|^2dx\leq c.
	\end{equation*}
	Since $\gamma+1>\frac{\gamma+2}{2}$, we also have that $\{\partial_t\left(\we\right)^{\gamma+1}\}$ is bounded in $L^2(\ot)$. By (ii) in Lemma \ref{la}, the sequence $\{\ome\}$ is precompact in $C([0, T], L^2(\Omega))$.
	It immediately follows from the boundedness of $\{\we\}$ that $\{\weg^{\gamma+1}\}$ is precompact in $C([0, T], L^p(\Omega))$ for each $p\geq 1$. Consequently,
	\begin{equation}\label{pc}
	\io\left(\we(x,t)\right)^qdx\rightarrow\io w^q(x,t)dx\ 
	\ \mbox{for each $t\in [0,T]$ and each $q\geq \gamma+1$.}
	\end{equation}

	Take $\varepsilon\rightarrow 0$ in \eqref{aew44} to obtain
	\begin{equation*}
	\partial_tw-\frac{\mu\gamma}{\gamma+1}\Delta w^{\gamma+1}= R.
	\end{equation*}	
	Subtract this equation from \eqref{aew44} and keep \eqref{ws1} in mind to get
	\begin{eqnarray}
	\partial_t(\we-w)-\frac{\mu\gamma}{\gamma+1}\Delta\left[\left(\we\right)^{\gamma+1}- w^{\gamma+1}\right]-\varepsilon\Delta \we
	&=&\re-R.\label{ws2}
	\end{eqnarray}
	%Remember
	%It is easy to see that
	%\begin{equation*}
	%\re\rightharpoonup R\ \ \mbox{weakly in $L^p(\ot)$ for each $p\geq 1$.}
	%\end{equation*}
	Use $\left(\we\right)^{\gamma+1}- w^{\gamma+1}$ as a test function \eqref{ws2} to derive
	\begin{eqnarray}
	\lefteqn{\frac{\mu\gamma}{\gamma+1}\ioT\left|\nabla\left[\left(\we\right)^{\gamma+1}- w^{\gamma+1}\right]\right|^2dxdt}\nonumber\\
	&&+\varepsilon\ioT\nabla\we\cdot\nabla\left[\left(\we\right)^{\gamma+1}- w^{\gamma+1}\right]dxdt\nonumber\\
	&=& \ioT (\re-R)\left[\left(\we\right)^{\gamma+1}- w^{\gamma+1}\right]dxdt\nonumber\\
	&&-\int_0^T\left\langle\partial_t(\we-w),\left(\we\right)^{\gamma+1}- w^{\gamma+1}\right\rangle dt.\label{ws4}
	\end{eqnarray}	
	We will show that the last three terms in the above equation all go to $0$ as $\varepsilon\rightarrow 0$.
	It is easy to see from Lemma \ref{l41} that
	\begin{eqnarray}
	\lefteqn{\left|\varepsilon\ioT\nabla\we\cdot\nabla\left[\left(\we\right)^{\gamma+1}- w^{\gamma+1}\right]dxdt\right|}\nonumber\\
	&=&4\varepsilon\left|\ioT\sqrt{\we}\nabla\sqrt{\we}\cdot\left[\left(\we\right)^{\frac{\gamma+1}{2}}\nabla\left(\we\right)^{\frac{\gamma+1}{2}}- w^{\frac{\gamma+1}{2}}\nabla w^{\frac{\gamma+1}{2}}\right]dxdt\right|\nonumber\\
	&\leq &c\sqrt{\varepsilon}\rightarrow 0\ \ \mbox{as $\varepsilon\rightarrow 0$.}
	\end{eqnarray}
	By \eqref{wc} and \eqref{rhl}, we have
	\begin{equation*}
	\ioT (\re-R)\left[\left(\we\right)^{\gamma+1}- w^{\gamma+1}\right]dxdt\rightarrow 0\ \ \mbox{as $\varepsilon\rightarrow 0$.}
	\end{equation*}
	Finally,  we compute from Lemma \ref{lm} and \eqref{pc} that
	\begin{eqnarray*}
		\lefteqn{\int_0^T\left\langle\partial_t(\we-w),\left(\we\right)^{\gamma+1}- w^{\gamma+1}\right\rangle dt}\nonumber\\
		&=&\frac{1}{\gamma+2}\int_{0}^{T}\left[\frac{d}{dt}\io\left(\we\right)^{\gamma+2}dx+\frac{d}{dt}\io w ^{\gamma+2}dx\right]dt\nonumber\\
		&&-\int_0^T\left\langle\partial_t\we, w^{\gamma+1}\right\rangle dt-\int_0^T\left\langle\partial_t w, \weg^{\gamma+1}\right\rangle dt\nonumber\\
		&=&\frac{1}{\gamma+2}\left[\io\left(\we(x,T)\right)^{\gamma+2}dx+\io w^{\gamma+2}(x,T) dx\right]\nonumber\\
		&&-\frac{2}{\gamma+2}\io\left(w^{(0)}(x)\right)^{\gamma+2}dx-\int_0^T\left\langle\partial_t\we, w^{\gamma+1}\right\rangle dt-\int_0^T\left\langle\partial_t w, \weg^{\gamma+1}\right\rangle dt\nonumber\\
		&\rightarrow&\frac{2}{\gamma+2}\io w^{\gamma+2}(x,T) dx-\frac{2}{\gamma+2}\io\left(w^{(0)}(x)\right)^{\gamma+2}dx-2\int_0^T\left\langle\partial_tw, w^{\gamma+1}\right\rangle dt=0.
	\end{eqnarray*}
	This completes the proof.
\end{proof}
\begin{proof}[Proof of Theorem \ref{mth}]
	Equipped with this lemma, we can complete the proof of Theorem \ref{mth}.
	Keeping \eqref{wep} in mind, we can
	set
	\begin{equation*}
	\eta_1^{(\varepsilon)}=\frac{\uoe}{\we},\ \ \eta_2^{(\varepsilon)}=\frac{\ute}{\we}.
	\end{equation*}
	Suppose
	\begin{equation*}
	\eta_1^{(\varepsilon)}\rightarrow \eta_1,\ \ \eta_2^{(\varepsilon)}\rightarrow \eta_2\ \ \mbox{weak$^*$ in $L^\infty(\ot)$.}
	\end{equation*}
	We calculate
	\begin{eqnarray}
	\uoe\nabla\weg^{\gamma}&=&\eta_1^{(\varepsilon)}\we\nabla\weg^{\gamma}\nonumber\\
	&=&\frac{\gamma}{\gamma+1}\eta_1^{(\varepsilon)}\nabla\weg^{\gamma+1}\nonumber\\
	&\rightarrow &\frac{\gamma}{\gamma+1}\eta_1\nabla w^{\gamma+1}
	=\eta_1w\nabla w^{\gamma} \ \mbox{weakly in $\left(L^2(\ot)\right)^d$.}
	\end{eqnarray}
	We claim that
	\begin{equation}\label{ff}
	\eta_1w=\uo\ \ \mbox{a.e. on $\ot$}.
	\end{equation}
	To see this, for each $\delta>0$ we have
	\begin{equation*}
	\eta_1^{(\varepsilon)}(\we-\delta)^+\rightarrow \eta_1(w-\delta)^+\ \ \mbox{weak$^*$ in $L^\infty(\ot)$.}
	\end{equation*}
	Note that $\frac{(\we-\delta)^+}{\we}\leq 1$.
	%On the other hand,
	Subsequently,
	\begin{equation*}
	\eta_1^{(\varepsilon)}(\we-\delta)^+=\uoe\frac{(\we-\delta)^+}{\we}\rightarrow \uo\frac{(w-\delta)^+}{w}\ \ \mbox{weak$^*$ in $L^\infty(\ot)$.}
	\end{equation*}
	We obtain
	\begin{equation*}
	\uo\frac{(w-\delta)^+}{w}=\eta_1(w-\delta)^+ \ \ \mbox{for each $\delta>0$}.
	\end{equation*}
	This implies that
	\begin{equation*}
	\uo=w\eta_1\ \ \mbox{on the set $\{w>0\}$.}
	\end{equation*}
	If $w=0$, then $\uo=0$, and we still have $\uo=w\eta_1$. This completes the proof of \eqref{ff}.
	Similarly, we can show
	\begin{equation*}
	\ute\nabla\weg^{\gamma}\rightarrow \ut\nabla w^\gamma\ \ \mbox{weakly in $\left(L^2(\ot)\right)^d$.}
	\end{equation*}
	We are ready to pass to the limit in \eqref{aeuo4} and \eqref{aeut4}, thereby finishing the proof of Theorem \ref{mth}.
	% to obtain the existence assertion in the case where $\mu=\nu$.
\end{proof}

Finally, we remark that we can extend  Theorem \ref{mth} to the case considered in \cite{GPS}. For this purpose, we set $\mu=\nu=1$ and let $w, R, R_1, R_2 $ be given as before.
% summarize our results in the following
\begin{corollary}
	Let (H1), (H2), (H5) be satisfied,  and assume that (H3) holds for $\Omega=\RN$ and $\uo^{(0)},\ut^{(0)}\in W^{1,2}_{\textup{loc}}(\RN)$. Then there is a weak solution to the initial value problem
	\begin{eqnarray}
	\pt\uo-\mdiv\left(\uo\nabla w^\gamma\right)&=&R_1\ \ \mbox{in $\RN\times(0,T)$},\\
	\pt\ut-\mdiv\left(\ut\nabla w^\gamma\right)&=&R_2\ \ \mbox{in $\RN\times(0,T)$},\\
	(\uo,\ut)\mid_{t=0}&=&(\uo^{(0)}(x),\ut^{(0)}(x))\ \ \mbox{on $\RN$}
	\end{eqnarray}
	in the following sense:
	\begin{enumerate}
		\item[\textup{(C1)}]$\uo,\ut$ are non-negative and bounded with
		\begin{equation*}
		%\partial_t\uo,\ \ \partial_t\ut\in L^2(0,T; \left(W^{1,2}(\Omega)\right)^*),\ \
		 w^{\frac{\gamma+1}{2}}\in L^2(0,T; W^{1,2}_{\textup{loc}}(\RN));
		\end{equation*}
	%	where $w$ is given as in \eqref{wdef} and $\left(W^{1,2}(\Omega)\right)^*$ denotes the dual space of $W^{1,2}(\Omega)$;
		\item[\textup{(C2)}]there hold
		\begin{eqnarray*}
			-\int_{\RN\times(0,T)}\uo\partial_t\varphi dxdt+\int_{\RN\times(0,T)}\uo\nabla w^\gamma\cdot\nabla\varphi dxdt&=&\int_{\RN\times(0,T)} R_1\varphi dxdt
			%-\langle \uo(\cdot, T), \varphi(\cdot, T)\rangle\nonumber\	&&
			+\int_{\RN} \uo^{(0)}(x)\varphi(x, 0)dx,\\
			-\int_{\RN\times(0,T)}\ut\partial_t\varphi dxdt+\int_{\RN\times(0,T)}\ut\nabla w^\gamma\cdot\nabla\varphi dxdt&=&\int_{\RN\times(0,T)} R_2\varphi dxdt
			%-\langle \ut(\cdot, T), \varphi(\cdot, T)\rangle\nonumber\\&&
			+ \int_{\RN}\ut^{(0)}(x)\varphi(x, 0)dx
		\end{eqnarray*}
		for each smooth function $\varphi$ with compact support and $\varphi(x,T)=0$. 
	%	where $\langle \cdot,\cdot\rangle$ denotes the duality pairing between $W^{1,2}(\Omega)$ and $\left(W^{1,2}(\Omega)\right)^*$.
	\end{enumerate}
\end{corollary}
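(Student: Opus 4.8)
The plan is to recover the whole-space result from Theorem \ref{mth} by an exhaustion argument over expanding balls, together with a diagonal extraction. For each integer $k\geq 1$ I restrict the initial data to $\bk$; since $\uo^{(0)},\ut^{(0)}\in W^{1,2}_{\textup{loc}}(\RN)$, the restrictions lie in $W^{1,2}(\bk)$, so (H4) holds on $\bk$, while (H1)--(H3) hold there because they are assumed on all of $\RN$. Theorem \ref{mth} (with $\mu=\nu=1$) then furnishes a weak solution $(\uok,\utk)$ on $\bk\times(0,T)$ with $\wk=\uok+\utk$, satisfying $0\leq\uok,\utk$ and $\wk\leq w_p$. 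Summing the two equations and using $\mu=\nu=1$, the total density obeys the porous medium equation
\begin{equation*}
\pt\wk-\frac{\gamma}{\gamma+1}\Delta(\wk)^{\gamma+1}=\rk\quad\text{in }\bk\times(0,T),
\end{equation*}
with no-flux boundary condition, where $\rk=\uok F(\wk)+\utk G(\wk)$ satisfies $0\leq\rk\leq w_pM_0$ uniformly in $k$.

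The first task is to make the estimates of Section 3 uniform in $k$ but local in space. Fix $R>0$ and a cutoff $\zeta\in C_c^\infty(B_{2R}(0))$ with $0\leq\zeta\leq 1$ and $\zeta\equiv 1$ on $B_R(0)$. For $k>2R$ I run the localized analogue of Lemma \ref{l41}, testing the $\uok$- and $\utk$-equations with $\zeta^2\ln(\uok+\tau)$ and $\zeta^2\ln(\utk+\tau)$ and letting $\tau\to 0$; the commutator terms produced by $\nabla\zeta$ are controlled by the uniform bounds $\wk\leq w_p$ and $\rk\leq w_pM_0$. Adding the two identities yields
\begin{equation*}
\int_0^T\int_{B_R(0)}\left|\nabla(\wk)^{\frac{\gamma+1}{2}}\right|^2dxdt\leq c(R),
\end{equation*}
with $c(R)$ independent of $k$. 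Because $\wk\leq w_p$, this also bounds $\nabla(\wk)^{\gamma+1}$ in $L^2(B_R(0)\times(0,T))$ uniformly, and the porous medium equation then bounds $\pt\wk$ in $L^2(0,T;(W^{1,2}(B_R(0)))^*)$. By part (i) of Lemma \ref{la}, $\{\wk\}$ is precompact in $L^2(B_R(0)\times(0,T))$, hence in $L^p_{\textup{loc}}$ for all $p\geq 1$ by boundedness. A diagonal argument over $R=1,2,\dots$ extracts a subsequence with $\wk\to w$ a.e. and in $L^p_{\textup{loc}}$, $\uok\to\uo$ and $\utk\to\ut$ weak$^*$ in $L^\infty$, $(\wk)^{\frac{\gamma+1}{2}}\to w^{\frac{\gamma+1}{2}}$ weakly in $L^2(0,T;W^{1,2}_{\textup{loc}}(\RN))$, and $R_i^{(k)}\to R_i$ as in \eqref{r1c}--\eqref{rhl}.

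The decisive step is the localized analogue of Lemma \ref{l33}: I claim $\nabla(\wk)^{\gamma+1}\to\nabla w^{\gamma+1}$ strongly in $L^2(B_R(0)\times(0,T))$ for every $R$. Passing to the limit in the porous medium equation gives $\pt w-\frac{\gamma}{\gamma+1}\Delta w^{\gamma+1}=R$; I subtract this from the $\wk$-equation and test the difference with $\zeta^2[(\wk)^{\gamma+1}-w^{\gamma+1}]$. The source and $\nabla\zeta$ terms vanish in the limit by the strong $L^p_{\textup{loc}}$ convergence of $\wk$ and by \eqref{rhl}, while the time-derivative pairing is treated by a localized version of Lemma \ref{lm}, exactly as in Lemma \ref{l33}; to close it I first establish, by testing with $\zeta^2\pt(\wk)^{\gamma+1}$ in the manner of Lemma \ref{l33}, a uniform local bound on $\pt(\wk)^{\frac{\gamma+2}{2}}$ in $L^2$ and on $\sup_t\int\zeta^2|\nabla(\wk)^{\gamma+1}|^2$, which yields via part (ii) of Lemma \ref{la} the pointwise-in-$t$ convergence $\int_{B_{2R}(0)}\zeta^2(\wk(\cdot,t))^{\gamma+2}dx\to\int_{B_{2R}(0)}\zeta^2 w^{\gamma+2}(\cdot,t)dx$. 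With the strong local gradient convergence in hand, the passage to the limit in the flux follows the proof of Theorem \ref{mth} verbatim: writing $\uok\nabla(\wk)^{\gamma}=\frac{\gamma}{\gamma+1}\eta_1^{(k)}\nabla(\wk)^{\gamma+1}$ with $\eta_1^{(k)}=\uok/\wk$, the weak$^*$-convergent factor $\eta_1^{(k)}$ times the strongly convergent factor $\nabla(\wk)^{\gamma+1}$ converges weakly in $L^2_{\textup{loc}}$, with the identification $\eta_1 w=\uo$ (and $\eta_2 w=\ut$) obtained by the same $(w-\delta)^+$ device. Testing the equations against any smooth $\varphi$ of compact support with $\varphi(\cdot,T)=0$ and sending $k\to\infty$ then gives (C2), while (C1) records the uniform bounds already established.

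The main obstacle is the localization of Lemmas \ref{l41} and \ref{l33}: every integral identity used in Section 3 exploited the global no-flux boundary condition, so here one must insert the cutoff $\zeta^2$ throughout and verify that the resulting $\nabla\zeta$ commutator terms and the localized time-derivative pairings are controlled purely by the uniform local energy bound and the $L^\infty$ bounds, independently of $k$. In particular, reproducing the second time-derivative estimate of Lemma \ref{l33} in localized form---needed for the pointwise-in-time convergence that makes the pairing term vanish---is the most delicate point; once it is in place, the remaining limits are routine adaptations of Section 3.
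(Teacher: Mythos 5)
Your proposal is correct and follows essentially the same route as the paper's own (brief) outline: exhaustion by the balls $\bk$, localized versions of Lemmas \ref{l41} and \ref{l33} obtained by inserting a cutoff $\zeta^2$ into the test functions (the $\log$ test function and $\zeta^2\pt\wkg^{\gamma+1}$), followed by the same $\eta_i^{(k)}=u_i^{(k)}/\wk$ and $(w-\delta)^+$ device to pass to the limit in the flux. The only cosmetic difference is that you localize the entropy estimate on the two individual equations while the paper applies $\zeta^2\ln(\wk+\tau)$ to the summed porous-medium equation; both yield the same local bound on $\nabla\wkg^{\frac{\gamma+1}{2}}$.
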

We will give a brief outline of the proof. To this end,
we set
\begin{eqnarray*}
\bk&=&\{x\in\RN: |x|<k\},\ \ k=1,2,\cdots.
\end{eqnarray*}
We replace $\Omega$ in \eqref{euo}-\eqref{uicon} by $\bk$ and 
denote the resulting solution by $(\uok, \utk)$. That is, we have
%Consider the problem
\begin{eqnarray}
\partial_t\uok-\mdiv\left[\uok\nabla\wkg^\gamma\right]&=&\rok  \ \ \mbox{in $\bk\times(0,T)$},\label{inf1}\\
\partial_t\utk-\mdiv\left[\utk\nabla\wkg^\gamma\right]&=&\rtk  \ \ \mbox{in $\bk\times(0,T)$},\label{inf2}\\
\uok\nabla\wkg^\gamma\cdot\mathbf{n}&=&\utk\nabla\wkg^\gamma\cdot\mathbf{n}=0 \ \ \mbox{in $\partial\bk\times(0,T)$},\label{inf3}\\
(\uok,\utk)\mid_{t=0}&=&(\uo^{(0)}(x),\ut^{(0)}(x))\ \ \mbox{on $\bk$}.\label{inf4}
\end{eqnarray}
Of course, here
\begin{eqnarray*}
\wk&=&\uok+\utk,\\
\rok&=&\uok\fo(\wk)+\utk\go(\wk),\\
\rtk&=&\uok\ft(\wk)+\utk\gt(\wk).
\end{eqnarray*}
Moreover,
%We can immediately conclude that
\begin{equation}
\uok\geq 0,\ \ \utk\geq 0,\ \ \wk\leq w_p.
\end{equation}
Adding \eqref{inf2} to \eqref{inf1} yields
\begin{equation}\label{wk}
\pt\wk-\frac{\gamma}{\gamma+1}\Delta\wkg^{\gamma+1}=\rok+\rtk\equiv\rk  \ \mbox{in $\bk\times(0,T)$}.
\end{equation}
Pick a smooth cut-off function $\zeta$ on $\RN$. From here on, we assume that $k$ is so large that
\begin{equation}
\textup{supp}\ \zeta\subset\bk.
\end{equation}
By using $\zeta^2\ln(\wk+\tau), \tau>0,$ as a test function in \eqref{wk}, we can infer from the proof of Lemma \ref{l41} that
\begin{eqnarray}
\int_{\RN\times(0,T)}\zeta^2\left|\nabla\weg^{\frac{\gamma+1}{2}}\right|^2dxdt\leq
c.
\end{eqnarray}      
Here $c$ depends on both $T$ and $\zeta$. Similarly, use $\zeta^2\pt\wkg^{\gamma+1}$ as test function in \eqref{wk} to derive
\begin{eqnarray}
\int_{\RN\times(0,T)}\zeta^2\left(\pt\wkg^{\frac{\gamma+2}{2}}\right)^2dxdt+\sup_{0\leq t\leq T}\int_{\RN}\zeta^2\left|\nabla \wkg^{\gamma+1}\right|^2dx\leq c.
\end{eqnarray}  
It is not difficult to see that Lemmas \ref{l32} and \ref{l33} still hold with $L^p(\ot) $ being replaced by $L^p(0,T; L^p_{\textup{loc}}(\RN)) $ and $L^2(0, T; W^{1,2}(\Omega))$ by $L^2(0, T; W^{1,2}_{\textup{loc}}(\RN))$, respectively. Take $k\rightarrow \infty$ in \eqref{inf1}-\eqref{inf4} suitably to conclude the corollary.         

\end{document}